\newcommand*\circled[1]{\tikz[baseline=(char.base)]{
            \node[shape=circle,draw,inner sep=1pt] (char) {#1};}}
\sloppy\allowdisplaybreaks[4]
  \def\hA{\widehat{A}} \def\cA{{\cal A}}
  \def\hB{\widehat{B}} 
  \def\hC{\widehat{C}} \def\cC{{\cal C}}
  \def\hD{\widehat{D}} 
\def\dbE{\mathbb{E}}   
\def\dbF{\mathbb{F}} \def\sF{\mathscr{F}}
   \def\cJ{{\cal J}}
\def\dbP{\mathbb{P}}   \def\cP{{\cal P}}
  \def\hQ{\widehat{Q}} \def\cQ{{\cal Q}}
\def\dbR{\mathbb{R}}  \def\hR{\widehat{R}} \def\cR{{\cal R}}
\def\dbS{\mathbb{S}}  \def\hS{\widehat{S}} \def\cS{{\cal S}}
 \def\sU{\mathscr{U}}  
 \def\sV{\mathscr{V}}
\def\bA{\bar{A}}    \def\wX{\widetilde{X}}   \def\tu{\ti u}
\def\bB{\bar{B}}    \def\wY{\widetilde{Y}}   \def\tx{\ti x}
\def\bC{\bar{C}}  
\def\bD{\bar{D}}
\def\bQ{\bar{Q}}
\def\bS{\bar{S}}
\def\bR{\bar{R}}
\def\ss{\smallskip}   \def\lt{\left}          
\def\ms{\medskip}     \def\rt{\right}         
\def\h{\hat}          \def\lan{\langle}       
\def\wh{\widehat}     \def\ran{\rangle}       
\def\q{\quad}               
\def\qq{\qquad}             \def\deq{\triangleq}
\def\no{\noindent}    \def\blan{\big\lan}     \def\sc{\scriptscriptstyle}
\def\hp{\hphantom}    \def\bran{\big\ran}     \def\scT{\sc T}
\def\nn{\nonumber}    \def\Blan{\Big\lan\!\!} \def\ds{\displaystyle}
\def\rf{\eqref}       \def\Bran{\!\!\Big\ran} 
\def\cd{\cdot}        \def\({\Big(}           
\def\ti{\tilde}       \def\){\Big)}           \def\les{\leqslant}
   \def\[{\Big[}           \def\ges{\geqslant}
  \def\]{\Big]}           %\def\tb{\textcolor{blue}}
\def\a{\alpha}       \def\l{\lambda}    \def\D{\varDelta}   \def\vP{\varPi}
\def\b{\beta}                    \def\Up{\varUpsilon}
\def\d{\delta}       \def\th{\theta}       
\def\e{\varepsilon}      \def\L{\varLambda}
\def\f{\varphi}           \def\Om{\varOmega}
       \def\si{\sigma}    \def\Si{\varSigma}
\def\i{\infty}             \def\Th{\varTheta}
\def\be{\begin{equation}}
\def\bel{\begin{equation}\label}
\def\ee{\end{equation}}
\def\bea{\begin{eqnarray}}
\def\eea{\end{eqnarray}}
\def\bt{\begin{theorem}}
\def\et{\end{theorem}}
\def\bc{\begin{corollary}}
\def\ec{\end{corollary}}
\def\bl{\begin{lemma}}
\def\el{\end{lemma}}
\def\bp{\begin{proposition}}
\def\ep{\end{proposition}}
\def\br{\begin{remark}}
\def\er{\end{remark}}
\def\ba{\begin{array}}
\def\ea{\end{array}}
\def\bd{\begin{definition}}
\def\ed{\end{definition}}
\def\5n{\negthinspace \negthinspace \negthinspace \negthinspace \negthinspace }
\def\4n{\negthinspace \negthinspace \negthinspace \negthinspace }
\def\3n{\negthinspace \negthinspace \negthinspace }
\def\2n{\negthinspace \negthinspace }
\def\1n{\negthinspace }
\newtheoremstyle{thry}% name
{}      % Space above
{}      % Space below
{\sl}   % Body font
{}      % Indent amount
{\bf}   % Theorem head font
{.}     % Punctuation after theorem head
{.5em}  % Space after theorem head
{}      % Theorem head spec (can be left empty, meaning 'normal')
\theoremstyle{thry}
\newtheorem{theorem}{Theorem}[section]
\newtheorem{proposition}[theorem]{Proposition}
\newtheorem{corollary}[theorem]{Corollary}
\newtheorem{lemma}[theorem]{Lemma}
\theoremstyle{definition}
\newtheorem{definition}[theorem]{Definition}
\theoremstyle{remark}
\newtheorem{remark}[theorem]{Remark}
\def\punct{}
\newtheoremstyle{dotless}{}{}{\rm}{}{\bf}{\punct}{.5em}{}
\theoremstyle{dotless}
\newenvironment{taggedassumption}[1]
 {\taggedassumptionx}
 {\endtaggedassumptionx}
   \newcommand{\setword}[2]{%
   \phantomsection
   #1\def\@currentlabel{\unexpanded{#1}}\label{#2}%
   }
\begin{document}

\title{\bf Turnpike Properties for Mean-Field Linear-Quadratic Optimal Control Problems}
\author{Jingrui Sun\thanks{Department of Mathematics, Southern University of Science and Technology, Shenzhen,
                           518055, China (Email: {\tt sunjr@sustech.edu.cn}).
                           This author is supported by NSFC grant 11901280 and Guangdong Basic and Applied Basic
                           Research Foundation 2021A1515010031.}
\and
Jiongmin Yong\thanks{Department of Mathematics, University of Central Florida, Orlando, FL 32816, USA
                    (Email: {\tt jiongmin.yong@ucf.edu}).
                    This author is supported by NSF grant DMS-1812921.} }

\maketitle

\no{\bf Abstract.}
This paper is concerned with an optimal control problem for a mean-field linear stochastic differential equation with a quadratic functional in the infinite time horizon.
Under suitable conditions, including the stabilizability, the (strong) exponential, integral, and mean-square turnpike
properties for the optimal pair are established.
The keys are to correctly formulate the corresponding static optimization problem and find the equations
determining the correction processes.
These have revealed the main feature of the stochastic problems which are significantly different from
the deterministic version of the theory.

\ms
\no{\bf Key words.}
Strong turnpike property, mean-field, stochastic optimal control, linear-quadratic, static optimization,
stabilizability, Riccati equation.

\ms
\no{\bf AMS 2020 Mathematics Subject Classification.}  49N10, 49N80, 93E15, 93E20.

\section{Introduction}\label{Sec:Intro}

Let $(\Om,\sF,\dbP)$ be a complete probability space on which a standard one-dimensional Brownian motion
$W=\{W(t);\,t\ges 0\}$ is defined and denote by $\dbF=\{\sF_t\}_{t\ges0}$ the usual augmentation of the
natural filtration generated by $W$.
Consider the following controlled linear mean-field stochastic differential equation (SDE, for short)
\begin{equation}\label{TP:state}\left\{\begin{aligned}
dX(t) &= \big\{AX(t) + \bA\dbE[X(t)] + Bu(t) + \bB\dbE[u(t)] + b \big\}dt \\
      &\hp{=\ } +\big\{CX(t) + \bC\dbE[X(t)] + Du(t) + \bD\dbE[u(t)] + \si\big\}dW(t), \q t\ges0, \\
 X(0) &= x,
\end{aligned}\right.\end{equation}
and the quadratic cost functional
\begin{align}\label{TP:cost}
J_{\scT}(x;u(\cd))
&= \dbE\int_0^T\Bigg[\Blan\begin{pmatrix*}[l]Q & \!S^\top \\ S & \!R\end{pmatrix*}\!
                          \begin{pmatrix}X(t) \\ u(t)\end{pmatrix}\!,
                          \begin{pmatrix}X(t) \\ u(t)\end{pmatrix}\Bran
+2\Blan\begin{pmatrix}q \\ r\end{pmatrix}\!,\begin{pmatrix}X(t) \\ u(t)\end{pmatrix}\Bran\nn\\
&\hp{=\ }\qq\q +\Blan\begin{pmatrix*}[l]\bQ & \!\bS^\top \\ \bS & \!\bR\end{pmatrix*}\!
                \begin{pmatrix}\dbE[X(t)] \\ \dbE[u(t)]\end{pmatrix}\!,
                \begin{pmatrix}\dbE[X(t)] \\ \dbE[u(t)]\end{pmatrix}\Bran\Bigg] dt,
\end{align}
where $A,\bA,C,\bC,Q,\bQ\in\dbR^{n\times n}$, $B,\bB,D,\bD\in\dbR^{n\times m}$, $S,\bS\in\dbR^{m\times n}$,
and $R,\bR\in\dbR^{m\times m}$ are constant matrices with $Q,\bQ,R,\bR$ being symmetric;
the superscript $\top$ denotes the transpose of matrices;
$\lan\cd\,,\cd\ran$ denotes the inner product of two vectors;
$b,\si,q\in\dbR^n$ and $r\in\dbR^m$ are constant vectors;
$x\in\dbR^n$ is called an {\it initial state}, and the process $u(\cd)$, called a {\it control},
is selected from the space
\begin{align*}
\sU[0,T] = \lt\{u:[0,T]\times\Om\to\dbR^m \bigm| u~\text{is $\dbF$-progressively measurable,}~\dbE\int_0^T|u(t)|^2dt<\i \rt\}.
\end{align*}
We introduce the following problem.

\ms

{\bf Problem (MFLQ)$_{\scT}$.}
For a given initial state $x\in\dbR^n$, find a control $u_{\scT}(\cd)\in\sU[0,T]$ such that
\begin{align}\label{TP:opt-u}
J_{\scT}(x;u_{\scT}(\cd)) = \inf_{u(\cd)\in\sU[0,T]} J_{\scT}(x;u(\cd)) \equiv V_{\scT}(x).
\end{align}

\ss

The above problem is referred to as a {\it mean-field linear-quadratic (LQ, for short) optimal control problem}
over the finite time horizon $[0,T]$, whose homogenous version (i.e., the case of $b,\si,q=0$, $r=0$)
was initially studied by Yong \cite{Yong2013} (in which the coefficients of the state equation and the
weighting matrices of the cost functional are allowed to be time-dependent) under the standard condition in LQ theory.
The results of Yong \cite{Yong2013} were later generalized by Sun \cite{Sun2017} to the case of uniformly
convex cost functionals with nonhomogenous terms and by Huang--Li--Yong \cite{Huang-Li-Yong2015} to
the infinite time horizon case.
Along this line, many researchers investigated the mean-field LQ problem from various points of view.
For example, Basei--Pham \cite{Basei-Pham2019} studied the mean-field LQ problem using a weak martingale approach;
Li--Li--Yu \cite{Li-Li-Yu2020} introduced the notion of relax compensators to deal with indefinite problems;
and L\"{u} \cite{Lu2020} considered a mean-field LQ problem for stochastic evolution equations.

%and Ni--Li--Zhang \cite{Ni-Li-Zhang} investigated discrete-time mean-field LQ problems, both over a finite and
%an infinite time horizon.

\ms

For a fixed time horizon, finite or infinite, Problem (MFLQ)$_{\scT}$ has been well studied in recent years,
whose optimal control usually admits a closed-loop representation via the solutions of two related Riccati
differential/algebra equations (see \cite{Sun-Yong2020}). However, the limiting behavior of the optimal pair
as the time horizon tends to infinity has not yet been fully discussed.

\ms

In the case that the diffusion term of the state equation \rf{TP:state} and the mean-field terms
$\dbE[X(t)],\dbE[u(t)]$ are absent, Problem (MFLQ)$_{\scT}$ reduces to a deterministic LQ problem,
denoted by Problem (LQ)$_{\scT}$, for which Porretta--Zuazua \cite{Porretta-Zuazua2013} and
Tr\'{e}lat--Zuazua \cite{Trelat-Zuazua2015} obtained the following result:
The optimal pair exponentially converges in the transient time (as $T\to\i$) to the minimum point
of the corresponding {\it static optimization problem}, under the controllability assumption on the
state equation and the observability assumption on the cost functional.
More precisely, it was shown in \cite{Porretta-Zuazua2013,Trelat-Zuazua2015} that there exist
constants $K,\l>0$, independent of $T$, such that the optimal pair $(X^*_{\scT}(\cd),u^*_{\scT}(\cd))$
of Problem (LQ)$_{\scT}$ satisfies
\begin{align}\label{turnpike-ODE}
|X^*_{\scT}(t)-x^*|+|u^*_{\scT}(t)-u^*| \les K\[e^{-\l t}+e^{-\l(T-t)}\], \q\forall t\in[0,T],
\end{align}
where $(x^*,u^*)$ is the solution to the following static optimization problem:
\begin{equation}\label{static-ODE}\left\{\begin{aligned}
&\text{Minimize}  \q F_0(x,u) \equiv \lan Qx,x\ran + 2\lan Sx,u\ran + \lan Ru,u\ran + 2\lan q,x\ran + 2\lan r,u\ran, \\
&\text{subject to}\q Ax+Bu+b=0.
\end{aligned}\right.\end{equation}
Clearly, \rf{turnpike-ODE} implies that
\begin{align}
|X^*_{\scT}(t)-x^*|+|u^*_{\scT}(t)-u^*|\les2Ke^{-\l\d T}, \q\forall t\in[\d T,(1-\d)T],
\end{align}
for any $\d\in(0,{1\over2})$. This means that in the major part of the time interval $[0,T]$,
the optimal pair $(X^*_{\scT}(\cd),u^*_{\scT}(\cd))$ of Problem (LQ)$_{\scT}$ stays close
to the solution $(x^*,u^*)$ of the static optimization problem \rf{static-ODE}.
Such a phenomenon is very similar to the {\it turnpike} in the US highway system.
This is the main reason of the name ``turnpike property'' was made.
Mathematically, we refer to \rf{turnpike-ODE} as the {\it exponential turnpike property} of
the (deterministic) LQ optimal control problem.

\ms  

The turnpike property was first realized by von Neumann \cite{Neumann1945} for
infinite time horizon deterministic optimal growth problems\footnote{Later, these problems have been formulated
as infinite time horizon optimal control problems.}.
The name ``turnpike'' was first coined by Dorfman--Samuelson--Solow \cite{Dorfman-Samuelson-Solow1958} in 1958. 
See \cite{Carlson-Haurie-Leizarowitz1991} for an excellent survey.
In the past several decades, the turnpike properties have attracted attentions of many researchers as such a
property often gives people an essential picture of the optimal pair without solving it analytically and leads
to a significant simplification in numerical methods for solving such kind of optimal control problems.
A large number of papers have been published for finite and infinite dimensional problems in the context
of discrete-time and continuous-time systems; see, for example, \cite{Carlson-Haurie-Leizarowitz1991,Zaslavski2006,Damm-Grune-Stieler-Worthmann2014,Zuazua2017,
Grune-Guglielmi2018,Lou-Wang2019,Zaslavski2019,Breiten-Pfeiffer2020,Grune-Guglielmi2021,Sakamoto-Zuazua2021,
Faulwasser-Grune2022} and the references therein.

\ms

The above mentioned works focus on deterministic optimal control problems. We now look at the stochastic case.
To begin with, we first assume that
$$ \bar A=\bar C=0, \q \bar B=\bar D=0, \q \begin{pmatrix*}[l]\bar Q&\bar S^\top \\ \bar S&\bar R\end{pmatrix*}=0, $$
i.e., the problem does not involve mean-field terms.
We denote the corresponding optimal control problem by Problem (SLQ)$_{\scT}$.
For such a problem, to discuss the turnpike properties of the optimal pair, the major difficulty is
to correctly formulate the corresponding static optimization problem.
Note that intuitively mimicking the situation of Problem (LQ)$_{\scT}$ will lead to an incorrect
static optimization problem.
Recently, Sun--Wang--Yong \cite{Sun-Wang-Yong2022} investigated the turnpike properties for
Problem (SLQ)$_{\scT}$ and found that the correct static optimization problem takes the following form:
\begin{equation}\label{static-SDE}\left\{\begin{aligned}
&\text{Minimize}  \q F_1(x,u) \equiv F_0(x,u) + \lan P(Cx+Du+\si),Cx+Du+\si\ran, \\
&\text{subject to}\q Ax+Bu+b=0,
\end{aligned}\right.\end{equation}
where $P>0$ is the solution to the following algebraic Riccati equation:
\begin{align*}
& PA + A^\top P + C^\top PC + Q \\
&\hp{PA} -(PB + C^\top PD + S^\top) (R + D^\top PD)^{-1} (B^\top P + D^\top C + S) = 0,
\end{align*}
whose solvability is guaranteed under some mild conditions. It was shown that the {\it expectation} of the optimal pair exhibits similar turnpike properties
as the deterministic case, that is, for some positive constants $K,\l>0$ independent of $T$,
\begin{equation}\label{TP:SWY}
\big|\dbE[X^*_{\scT}(t)-x^*]\big| + \big|\dbE[u^*_{\scT}(t)-u^*]\big| \les K\[e^{-\l t} + e^{-\l(T-t)}\],
\q\forall t\in[0,T].
\end{equation}
It is worth noting that \rf{TP:SWY} only tells us $(\dbE[X^*_{\scT}(\cd)],\dbE[u^*_{\scT}(\cd)])$
converges exponentially in the transient time. In general, however, we could not expect the following:
\begin{equation}\label{TP:SWY*}
\dbE|X^*_{\scT}(t)-x^*| + \dbE|u^*_{\scT}(t)-u^*| \les K\[e^{-\l t} + e^{-\l(T-t)}\], \q\forall t\in[0,T].
\end{equation}
But the behavior of the path $t\mapsto(X^*_{\scT}(t),u^*_{\scT}(t))$ seems to be more important and
it is a crucial step to establish general nonlinear theory.

\ms

In this paper, we shall investigate the turnpike properties for Problem (MFLQ)$_{\scT}$.
The main novelty and contributions of the paper can be briefly summarized as follows.

\ms

$\bullet$ We are concerned with stochastic mean-field LQ problems.
By including the expectations of the state and the control, our Problem (MFLQ)$_{\scT}$ substantially
extends the ones studied previously.
The significance is that we have successfully find the correct formulation of the static optimization
problem for the current case, merely under the stabilizability of the state equation, which covers that
for Problem (SLQ)$_{\scT}$ (without mean-field terms) presented in \cite{Sun-Wang-Yong2022}.

\ms

$\bullet$ As \rf{TP:SWY*} could not be expected in general, it needs to be corrected.
We find the proper replacement of $(x^*,u^*)$. It turns out that there exist stochastic processes
$\bm X^*(\cd)$ and $\bm u^*(\cd)$ independent of $T$, which can be determined explicitly, such that
the optimal pair $(X^*_{\scT}(\cd),u^*_{\scT}(\cd))$ of Problem (MFLQ)$_{\scT}$ satisfies the following
{\it strong turnpike property}:
\begin{equation}\label{TPs}
\dbE|X^*_{\scT}(t)-\bm X^*(t)|^2 + \dbE|u^*_{\scT}(t)-\bm u^*(t)|^2 \les K\[e^{-\l t} + e^{-\l(T-t)}\], \q\forall t\in[0,T],
\end{equation}
for some constants $K,\l>0$ independent of $T$. The processes $\bm X^*(\cd)$ and $\bm u^*(\cd)$ have
time-invariant means $x^*$ and $u^*$, respectively, and $(x^*,u^*)$ is exactly the solution to the
corresponding static optimization problem.

\ms

$\bullet$ We point out that for deterministic LQ problems, the controllability is normally assumed
to establish the turnpike properties; see, for example, \cite{Porretta-Zuazua2013,Trelat-Zuazua2015,Lou-Wang2019}.
We find that it suffices to assume the stabilizability condition for the state equation, which is much
weaker than the controllability condition. As a matter of fact, for controlled stochastic linear SDEs
(with or without mean-field terms), the former is much natural and easy to check than the latter.

\ms

The rest of the paper is organized as follows.
In \autoref{Sec:Pre}, we make some necessary preliminaries by introducing some notation, assumptions,
and a number of basic results that will be needed later.
In \autoref{Sec:main-result}, we introduce the static optimization problem associated with Problem (MFLQ)$_{\scT}$
and state the main results of the paper.
\autoref{Sec:stability} is devoted to the stability of Riccati equations, which plays a central role in
studying the turnpike properties.
In \autoref{Sec:proof}, we give the proofs of the main results stated in \autoref{Sec:main-result}.

\section{Preliminaries}\label{Sec:Pre}

In this section, we introduce the basic notation and assumptions which will be used throughout this paper.
We also collect a number of basic results on stochastic LQ optimal control problems and prove a simple
matrix inequality for later use.

\ms

For notational simplicity, we let
\begin{equation}\label{notation:1}\begin{aligned}
&\hA=A+\bA, \q \hB=B+\bB, \q \hC=C+\bC, \q \hD=D+\bD, \\
&\hQ=Q+\bQ, \q \hS=S+\bS, \q \hR=R+\bR.
\end{aligned}\end{equation}
Denote by $\dbS^n$ the space of symmetric $n\times n$ real matrices. For $P,\vP\in\dbS^n$, we let
\begin{equation}\label{notation:2}\begin{aligned}
\cQ(P) &= PA+A^\top P+C^\top PC+Q, &\q \wh\cQ(P,\vP) &= \vP\hA+\hA^\top\vP+\hC^\top P\hC+\hQ, \\
\cS(P) &= B^\top P+D^\top PC+S,    &\q \wh\cS(P,\vP) &= \hB^\top\vP+\hD^\top P\hC + \hS, \\
\cR(P) &= R+D^\top PD,             &\q \wh\cR(P)     &= \hR+\hD^\top P\hD.
\end{aligned}\end{equation}
For matrices $M,N\in\dbS^n$, we will write $M\ges N$ (respectively, $M>N$) if $M-N$ is positive
semi-definite (respectively, positive definite).
The following  basic assumptions will be imposed throughout the paper.

\begin{taggedassumption}{(A1)}\label{MFTP:A1}
The weighting matrices in \rf{TP:cost} satisfy
\begin{align*}
R,\hR >0,  \q  Q-S^\top R^{-1} S >0,  \q  \hQ-\hS^\top\hR^{-1}\hS >0.
\end{align*}
\end{taggedassumption}

\begin{taggedassumption}{(A2)}\label{MFTP:A2}
The controlled ordinary differential equation (ODE, for short)
\begin{align}\label{ODE-sys}
\dot X(t) = \hA X(t) +\hB u(t), \q t\ges0
\end{align}
is stabilizable, i.e., there exists a $\wh\Th\in\dbR^{m\times n}$ such that all the eigenvalues of
$\hA+\hB\wh\Th$ have negative real parts. In this case, $\h\Th$ is called a {\it stabilizer} of \rf{ODE-sys}. The controlled SDE
\begin{align}\label{SDE-sys}
 dX(t) = [AX(t)+Bu(t)]dt + [CX(t)+Du(t)]dW(t), \q t\ges0
\end{align}
is $L^2$-stabilizable, i.e., there exists a $\Th\in\dbR^{m\times n}$ such that for every initial state $x$,
the solution $X(\cd)$ of
$$\left\{\begin{aligned}
dX(t) &= (A+B\Th)X(t)dt + (C+D\Th)X(t)dW(t), \q t\ges0, \\
 X(0) &= x
\end{aligned}\right.$$
satisfies $\dbE\int_0^\i|X(t)|^2<\i$. In this case, $\Th$ is called a {\it stabilizer} of \rf{SDE-sys}.
\end{taggedassumption}

Let $C([0,T];\dbS^n)$ be the space of $\dbS^n$-valued continuous functions on $[0,T]$.
With the notation \rf{notation:1} and \rf{notation:2}, the differential Riccati equations
associated with Problem (MFLQ)$_{\scT}$ can be respectively written as
\begin{equation}\label{Ric:P}\left\{\begin{aligned}
& \dot P_{\scT}(t) + \cQ(P_{\scT}(t)) - \cS(P_{\scT}(t))^\top\cR(P_{\scT}(t))^{-1}\cS(P_{\scT}(t)) =0, \\
& P_{\scT}(T) = 0,
\end{aligned}\right.\end{equation}
and
\begin{equation}\label{Ric:Pi}\left\{\begin{aligned}
& \dot\vP_{\scT}(t) + \wh\cQ(P_{\scT}(t),\vP_{\scT}(t))
  -\wh\cS(P_{\scT}(t),\vP_{\scT}(t))^\top\wh\cR(P_{\scT}(t))^{-1}\wh\cS(P_{\scT}(t),\vP_{\scT}(t)) =0, \\
& \vP_{\scT}(T) = 0.
\end{aligned}\right.\end{equation}
We present the following result concerning the existence and uniqueness of solutions to \rf{Ric:P} and \rf{Ric:Pi}.
For a proof, we refer to Yong \cite{Yong2013}.

\begin{lemma}\label{lmm:Ric-sol}
Let {\rm\ref{MFTP:A1}} hold. Then for any $T>0$, the differential Riccati equations
\rf{Ric:P} and \rf{Ric:Pi} admit a unique solution pair
$(P_{\scT}(\cd),\vP_{\scT}(\cd))\in C([0,T];\dbS^n)\times C([0,T];\dbS^n)$ satisfying
$P_{\scT}(t),\vP_{\scT}(t)\ges0$ for all $t\in[0,T]$.
\end{lemma}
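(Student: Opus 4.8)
The plan is to solve the two equations in cascade, using the crucial structural fact that \rf{Ric:P} is a \emph{self-contained} symmetric Riccati equation in $P_{\scT}$ alone, while \rf{Ric:Pi} becomes a Riccati equation in $\vP_{\scT}$ once $P_{\scT}(\cd)$ has been determined. For \rf{Ric:P} I would first apply the Cauchy--Lipschitz theorem: on the open subset of $\dbS^n$ where $\cR(P)=R+D^\top PD$ is invertible, the field $\cQ(P)-\cS(P)^\top\cR(P)^{-1}\cS(P)$ is a smooth (rational) function of $P$, and since \ref{MFTP:A1} gives $R>0$, this subset contains a neighborhood of the terminal value $P_{\scT}(T)=0$. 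Hence a unique solution exists locally on a maximal interval $(\t,T]$, and the entire task reduces to showing that $\t$ can be driven down to $0$ and that $P_{\scT}(t)\ges0$ throughout.

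To obtain positivity I would recast \rf{Ric:P} in completed-square form. Setting $\Th^*(t)=-\cR(P_{\scT}(t))^{-1}\cS(P_{\scT}(t))$ on the existence interval, a direct computation turns \rf{Ric:P} into the linear Lyapunov-type equation
\begin{equation*}
-\dot P_{\scT} = (A+B\Th^*)^\top P_{\scT} + P_{\scT}(A+B\Th^*) + (C+D\Th^*)^\top P_{\scT}(C+D\Th^*) + G^*,
\end{equation*}
whose forcing $G^*=\begin{pmatrix} I\\ \Th^*\end{pmatrix}^\top\begin{pmatrix} Q & S^\top\\ S & R\end{pmatrix}\begin{pmatrix} I\\ \Th^*\end{pmatrix}$ is positive semidefinite, since under \ref{MFTP:A1} the condition $R>0$, $Q-S^\top R^{-1}S>0$ is exactly positive definiteness of $\begin{pmatrix} Q & S^\top\\ S & R\end{pmatrix}$. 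Reading this equation backward from the zero terminal value, the second-moment (Feynman--Kac) representation $\lan P_{\scT}(t)x,x\ran=\dbE\int_t^T\lan G^*(s)X(s),X(s)\ran\,ds$ for the closed-loop SDE $dX=(A+B\Th^*)X\,ds+(C+D\Th^*)X\,dW$ yields $P_{\scT}(t)\ges0$ on all of $(\t,T]$.

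For global existence I would produce an a priori upper bound by comparison. Because $\Th^*$ minimizes the relevant quadratic form, replacing it by any fixed feedback (say $\Th\equiv0$) only raises the right-hand side, giving $-\dot P_{\scT}\les L(P_{\scT})+Q$ where $L(P)=A^\top P+PA+C^\top PC$ is the Lyapunov operator whose (backward) flow preserves positive semidefiniteness. Comparing with the solution $\bar P$ of the linear equation $-\dot{\bar P}=L(\bar P)+Q$, $\bar P(T)=0$, which exists on all of $[0,T]$ and is therefore bounded, yields $0\les P_{\scT}(t)\les\bar P(t)\les M$ on $(\t,T]$. This uniform bound prevents blow-up, so the continuation theorem forces $\t=0$; thus $P_{\scT}$ exists, is unique, and is positive semidefinite on $[0,T]$.

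Finally I would substitute the now-known $P_{\scT}(\cd)$ into \rf{Ric:Pi}. Since $\wh\cR(P_{\scT})=\hR+\hD^\top P_{\scT}\hD\ges\hR>0$ by \ref{MFTP:A1} and $P_{\scT}\ges0$, equation \rf{Ric:Pi} is again a Riccati equation, now in $\vP_{\scT}$ with bounded continuous coefficients, so the same three steps apply. The only structural difference is that after completing the square with $\wh\Th^*=-\wh\cR(P_{\scT})^{-1}\wh\cS(P_{\scT},\vP_{\scT})$, the $\vP_{\scT}$-linear part reduces to the \emph{deterministic} Lyapunov operator $(\hA+\hB\wh\Th^*)^\top\vP_{\scT}+\vP_{\scT}(\hA+\hB\wh\Th^*)$, and the forcing $\wh G^*=(\hC+\hD\wh\Th^*)^\top P_{\scT}(\hC+\hD\wh\Th^*)+\begin{pmatrix} I\\ \wh\Th^*\end{pmatrix}^\top\begin{pmatrix}\hQ & \hS^\top\\ \hS & \hR\end{pmatrix}\begin{pmatrix} I\\ \wh\Th^*\end{pmatrix}$ is positive semidefinite thanks to $P_{\scT}\ges0$ together with $\hQ-\hS^\top\hR^{-1}\hS>0$ from \ref{MFTP:A1}; positivity and the comparison bound then follow as before, giving existence, uniqueness, and $\vP_{\scT}\ges0$ on $[0,T]$. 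The main obstacle I anticipate is precisely that the completion-of-squares feedbacks $\Th^*,\wh\Th^*$ depend on the unknowns: positivity and the a priori bound must be argued on the only-locally-known existence interval and then bootstrapped, and making the comparison rigorous rests on the positivity-preserving property of the Lyapunov flow, which is the technical heart of the argument.
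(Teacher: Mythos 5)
Your proposal is correct. Note that the paper does not prove this lemma in-text at all: it simply cites Yong \cite{Yong2013}, so the comparison is with the argument standard in that literature. Your cascade structure (solve \rf{Ric:P} first, then treat \rf{Ric:Pi} as a Riccati equation in $\vP_{\scT}$ with known, continuous coefficients) is exactly how the cited solvability result is organized, and your three analytic steps are the ODE-theoretic repackaging of the control-theoretic proof: the Feynman--Kac representation of the closed-loop Lyapunov equation, which gives $P_{\scT}\ges0$, is the statement that the value function of the associated homogeneous LQ problem is nonnegative, and your comparison with the feedback $\Th\equiv0$ is precisely the ``cost of the zero control'' upper bound used there to rule out blow-up. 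Two details worth making explicit if this were written out in full: first, positivity via the representation $\lan P_{\scT}(t)x,x\ran=\dbE\int_t^T\lan G^*(s)X(s),X(s)\ran\,ds$ needs only $G^*\ges0$ (which holds since $\begin{pmatrix}Q&S^\top\\ S&R\end{pmatrix}>0$ by Schur complement), not any sign condition on $\cR(P_{\scT})$, so there is no circularity --- positivity comes first, and then $P_{\scT}\ges0$ yields $\cR(P_{\scT})\ges R>0$, which both validates the completion-of-squares inequality $\varPhi(0)\ges\varPhi(\Th^*)$ and keeps the trajectory in a compact subset of the vector field's domain so that the continuation theorem applies; second, uniqueness should be phrased as: any continuous solution of \rf{Ric:P} necessarily takes values where $R+D^\top PD$ is invertible (else the equation is meaningless), hence lies in the domain where Cauchy--Lipschitz uniqueness holds. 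Both points are implicit in your write-up, and your treatment of \rf{Ric:Pi} --- deterministic Lyapunov part, forcing $\wh G^*\ges0$ thanks to $P_{\scT}\ges0$ and $\hQ-\hS^\top\hR^{-1}\hS>0$ --- is likewise sound.
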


When considering mean-field LQ optimal control problems over an infinite time horizon,
we encounter {\it algebraic} Riccati equations (AREs, for short) instead of differential ones.
The following result establishes the unique solvability of the associated AREs,
whose proof can be found in \cite{Huang-Li-Yong2015}; see also \cite[Chapter 3]{Sun-Yong2020}.

\begin{lemma}\label{lmm:ARE-sol}
Let {\rm\ref{MFTP:A1}--\ref{MFTP:A2}} hold. Then the AREs
\begin{align}
& \cQ(P) - \cS(P)^\top\cR(P)^{-1}\cS(P) =0, \label{ARE:P} \\
& \wh\cQ(P,\vP)-\wh\cS(P,\vP)^\top\wh\cR(P)^{-1}\wh\cS(P,\vP) =0 \label{ARE:Pi}
\end{align}
admit a unique solution pair $(P,\vP)$ satisfying $P,\vP>0$. Moreover, the matrix
$$\wh\Th \deq -\wh\cR(P)^{-1}\wh\cS(P,\vP) $$
is a stabilizer of system \rf{ODE-sys}, and the matrix
$$\Th \deq  -\cR(P)^{-1}\cS(P) $$
is a stabilizer of system \rf{SDE-sys}.
\end{lemma}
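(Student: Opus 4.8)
The two algebraic Riccati equations decouple, and the plan is to exploit this: \rf{ARE:P} involves $P$ alone and is a genuinely \emph{stochastic} ARE, whereas once $P$ is known \rf{ARE:Pi} becomes a \emph{deterministic} ARE for $\vP$ with system matrices $\hA,\hB$. I would therefore first solve \rf{ARE:P} for $P$, then substitute $P$ into \rf{ARE:Pi} and solve for $\vP$, and finally read off the two stabilizer claims from the closed-loop forms of the equations.

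For the first equation I would produce $P$ as the limit, as $T\to\i$, of the finite-horizon Riccati solutions $P_{\scT}(\cd)$ furnished by Lemma~\ref{lmm:Ric-sol}. Since \rf{Ric:P} is autonomous with terminal value $0$, the map $T\mapsto P_{\scT}(0)$ is well defined; under \ref{MFTP:A1} the cost functional is uniformly convex, which makes $P_{\scT}(0)\ges0$ and nondecreasing in $T$, while the $L^2$-stabilizability in \ref{MFTP:A2} provides a feedback whose infinite-horizon cost is finite, giving a $T$-independent bound $P_{\scT}(0)\les MI$. Monotone convergence then yields a limit $P\ges0$ that solves \rf{ARE:P}, and the coercivity $Q-S^\top R^{-1}S>0$ in \ref{MFTP:A1} upgrades it to $P>0$. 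That $\Th=-\cR(P)^{-1}\cS(P)$ stabilizes \rf{SDE-sys} follows by rewriting \rf{ARE:P} in closed-loop form and using $P>0$ as a Lyapunov matrix for $dX=(A+B\Th)X\,dt+(C+D\Th)X\,dW$.

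With $P>0$ fixed, \rf{ARE:Pi} is (by \rf{notation:2}) a matrix Riccati equation in $\vP$ with control weight $\wh\cR(P)=\hR+\hD^\top P\hD>0$; completing the square in the cross term $\hD^\top P\hC+\hS$ brings it to the standard form $\vP\ti A+\ti A^\top\vP+\ti Q-\vP\hB\wh\cR(P)^{-1}\hB^\top\vP=0$, with $\ti A=\hA-\hB\wh\cR(P)^{-1}(\hD^\top P\hC+\hS)$ and $\ti Q$ the Schur complement $\hC^\top P\hC+\hQ-(\hD^\top P\hC+\hS)^\top\wh\cR(P)^{-1}(\hD^\top P\hC+\hS)$. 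The key algebraic input is $\ti Q>0$, which is precisely the matrix inequality prepared in this section: writing
\begin{align*}
\begin{pmatrix} \hQ+\hC^\top P\hC & \hS^\top+\hC^\top P\hD \\ \hS+\hD^\top P\hC & \hR+\hD^\top P\hD \end{pmatrix}
&= \begin{pmatrix} \hQ & \hS^\top \\ \hS & \hR \end{pmatrix}
 + \begin{pmatrix} \hC^\top \\ \hD^\top \end{pmatrix} P \begin{pmatrix} \hC & \hD \end{pmatrix},
\end{align*}
the first summand is positive definite by the Schur criterion together with $\hR>0$ and $\hQ-\hS^\top\hR^{-1}\hS>0$ from \ref{MFTP:A1}, and the second is $\ges0$ since $P\ges0$; taking the Schur complement of the (positive definite) lower-right block gives $\ti Q>0$. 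Because $(\ti A,\hB)$ is stabilizable iff $(\hA,\hB)$ is, which is the ODE half of \ref{MFTP:A2}, standard deterministic ARE theory yields a unique positive definite stabilizing solution $\vP>0$, and $\wh\Th=-\wh\cR(P)^{-1}\wh\cS(P,\vP)$ makes $\hA+\hB\wh\Th=\ti A-\hB\wh\cR(P)^{-1}\hB^\top\vP$ Hurwitz, i.e.\ a stabilizer of \rf{ODE-sys}. Uniqueness of the pair is then immediate: $P$ is the unique stabilizing solution of \rf{ARE:P}, and $\vP$ the unique stabilizing solution of the induced equation.

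I expect the solvability of the stochastic equation \rf{ARE:P} to be the main obstacle: the terms $C^\top PC$ and $D^\top PD$ prevent any reduction to a deterministic ARE, so one genuinely needs the infinite-horizon stochastic LQ machinery — the monotone limit of the $P_{\scT}(0)$, the uniform bound from stabilizability, and the verification that the limit is the \emph{stabilizing} nonnegative solution. By contrast, once $P$ and the matrix inequality are in hand, the analysis of \rf{ARE:Pi} is essentially classical deterministic Riccati theory.
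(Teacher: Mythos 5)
The paper offers no proof of this lemma to compare against: it simply cites Huang--Li--Yong \cite{Huang-Li-Yong2015} and \cite[Chapter 3]{Sun-Yong2020}. What you have written is essentially a reconstruction of the argument in those references, and your decomposition is the right one: \rf{ARE:P} involves $P$ alone and requires the infinite-horizon stochastic LQ machinery (monotone limit of the finite-horizon Riccati solutions, a $T$-independent upper bound from the $L^2$-stabilizability half of \ref{MFTP:A2}, and a Lyapunov/It\^o argument for the stabilizer claim), while \rf{ARE:Pi}, once $P$ is fixed, is a purely deterministic ARE in $\vP$ with data $(\hA,\hB)$. Your Schur-complement computation showing $\ti Q=\hC^\top P\hC+\hQ-(\hD^\top P\hC+\hS)^\top\wh\cR(P)^{-1}(\hD^\top P\hC+\hS)>0$ is correct and is precisely the strict-inequality analogue of \autoref{lmm:DLQ-matrix}; the feedback invariance of stabilizability passing from $(\hA,\hB)$ to $(\ti A,\hB)$ is correct; and the identification $\hA+\hB\wh\Th=\ti A-\hB\wh\cR(P)^{-1}\hB^\top\vP$ is exactly what makes $\wh\Th$ a stabilizer of \rf{ODE-sys}. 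This scheme is also consistent with how the paper itself operates later (Step 1 of the proof of \autoref{thm:vP-Pi:bound}, and the Lyapunov computation in \autoref{prop:EX*bound}).

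The one step you should tighten is uniqueness. The lemma asserts uniqueness of the pair among solutions with $P,\vP>0$, whereas you only assert uniqueness among \emph{stabilizing} solutions; these classes do not coincide by definition, so you need the bridge: every positive definite solution is stabilizing. For \rf{ARE:P} this follows from the closed-loop identity you already invoke: any solution $P'>0$, with $\Th'=-\cR(P')^{-1}\cS(P')$, satisfies
$$
P'(A+B\Th')+(A+B\Th')^\top P'+(C+D\Th')^\top P'(C+D\Th')
=-\big(Q-S^\top R^{-1}S+(\Th'+R^{-1}S)^\top R(\Th'+R^{-1}S)\big)<0,
$$
so It\^o's formula applied to $\lan P'X(t),X(t)\ran$ along the closed-loop system forces exponential decay of $\dbE|X(t)|^2$; hence $\Th'$ is a stabilizer, and a verification/comparison argument (identifying $\lan P'x,x\ran$ with the value of the infinite-horizon homogeneous problem) then yields $P'=P$. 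For the deterministic equation, $\ti Q>0$ makes $(\ti Q^{1/2},\ti A)$ observable, and the classical facts that every positive semidefinite solution is then stabilizing and that the stabilizing solution is unique close the argument. Neither patch is difficult --- the first uses only tools you already set up --- but without them the claimed uniqueness among positive definite solutions is asserted rather than proved.
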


Comparing \rf{ARE:P} with \rf{Ric:P}, and \rf{ARE:Pi} with \rf{Ric:Pi},
we may guess that there are some connections between $P$ and $P_{\scT}(\cd)$,
and between $\vP$ and $\vP_{\scT}(\cd)$.
The following result, proved in \cite{Sun-Wang-Yong2022}, establishes the connection
between $P$ and $P_{\scT}(\cd)$.
In \autoref{Sec:stability}, we will establish the connection between $\vP$ and $\vP_{\scT}(\cd)$.

\begin{lemma}\label{lmm:Si-P:bound}
Let {\rm\ref{MFTP:A1}--\ref{MFTP:A2}} hold.
Let $P_{\scT}(\cd)$ and $P$ be the unique solutions of \rf{Ric:P} and \rf{ARE:P}, respectively.
Then there exist constants $K,\l>0$, independent of $T$, such that
$$ |P_{\scT}(t)-P| \les Ke^{-\l(T-t)}, \q\forall t\in[0,T]. $$
\end{lemma}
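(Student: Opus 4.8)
\ms
\no\textbf{Proof sketch.}
The plan is to reduce the claim to the long-time convergence of an \emph{autonomous} Riccati flow and then to linearize around the equilibrium $P$, exploiting the exponential stability of the closed-loop Lyapunov semigroup guaranteed by Lemma~\ref{lmm:ARE-sol} under \ref{MFTP:A1}--\ref{MFTP:A2}. Since \eqref{Ric:P} is autonomous with terminal value $0$, the function $\bar P(s)\deq P_{\scT}(T-s)$ solves the forward Riccati flow
\begin{equation*}
\dot{\bar P}(s)=\cQ(\bar P(s))-\cS(\bar P(s))^\top\cR(\bar P(s))^{-1}\cS(\bar P(s)),\qquad \bar P(0)=0,
\end{equation*}
\emph{independently of} $T$. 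Hence $|P_{\scT}(t)-P|=|\bar P(T-t)-P|$, and it suffices to produce constants $K,\l>0$ with $|\bar P(s)-P|\les Ke^{-\l s}$ for all $s\ges0$; such $K,\l$ are then automatically independent of $T$.

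Next I would derive a closed linear equation for $\D(t)\deq P_{\scT}(t)-P$. Writing $\Th\deq-\cR(P)^{-1}\cS(P)$ for the SDE stabilizer of Lemma~\ref{lmm:ARE-sol} and $A_\Th\deq A+B\Th$, $C_\Th\deq C+D\Th$, one completes the square to put both \eqref{Ric:P} and \eqref{ARE:P} in closed-loop form relative to the \emph{common} feedback $\Th$. All terms except the quadratic remainder then coincide, so subtracting the two equations (and noting $\dot P_{\scT}=\dot\D$) leaves
\begin{equation*}
\dot\D(t)+\cL_\Th[\D(t)]=R_{\scT}(t),\qquad \D(T)=-P,
\end{equation*}
where $\cL_\Th[M]\deq MA_\Th+A_\Th^\top M+C_\Th^\top MC_\Th$ and $R_{\scT}(t)\deq(\Th-\Th_{\scT}(t))^\top\cR(P_{\scT}(t))(\Th-\Th_{\scT}(t))\ges0$ with $\Th_{\scT}(t)\deq-\cR(P_{\scT}(t))^{-1}\cS(P_{\scT}(t))$. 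Thus the whole discrepancy is carried by a \emph{quadratic} remainder in $\D$. Because $\Th$ is an $L^2$-stabilizer of \eqref{SDE-sys}, the system $dX=A_\Th X\,dt+C_\Th X\,dW$ is mean-square exponentially stable, equivalently $\cL_\Th$ generates an exponentially stable semigroup on $\dbS^n$, so that $\|e^{r\cL_\Th}\|\les Ke^{-\l r}$ for some $K,\l>0$.

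Finally I would close the estimate. A comparison argument for Riccati flows, using $\bar P(0)=0\les P$, $R_{\scT}\ges0$, and the nonnegativity from Lemma~\ref{lmm:Ric-sol}, yields the monotone a priori bounds $0\les P_{\scT}(t)\les P$; in particular $|\D|\les|P|$ and $\bar P(s)\to P$ as $s\to\i$. Since $\cS,\cR$ are affine in $P$ and $\cR(P)\ges R>0$, the feedback $P\mapsto-\cR(P)^{-1}\cS(P)$ is locally Lipschitz, so $|R_{\scT}(t)|\les C_0|\D(t)|^2$ with $C_0$ independent of $T$. Variation of constants gives
\begin{equation*}
\D(t)=e^{(T-t)\cL_\Th}[-P]-\int_t^T e^{(s-t)\cL_\Th}[R_{\scT}(s)]\,ds,
\end{equation*}
whence, taking norms and using $\|e^{r\cL_\Th}\|\les Ke^{-\l r}$ and $|R_{\scT}|\les C_0|\D|^2$,
\begin{equation*}
|\D(t)|\les K|P|e^{-\l(T-t)}+C_0K\int_t^T e^{-\l(s-t)}|\D(s)|^2\,ds.
\end{equation*}
I expect this quadratic integral inequality to be the main obstacle, as it does not close by itself. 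The resolution is a bootstrap off the qualitative convergence just established: choose $s_0$ so that $|\D(r)|\les\eta$ for $r\les T-s_0$ with $C_0K\eta<\l/2$; on this range the quadratic source is dominated by a small linear one and a Gronwall estimate gives decay at rate $\l/2$, while the fixed-length layer $[T-s_0,T]$ contributes only a constant that is absorbed by enlarging $K$. Patching the two ranges delivers $|\D(t)|\les Ke^{-\l(T-t)}$ on all of $[0,T]$, as required.
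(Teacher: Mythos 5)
Your proposal is correct, but it is worth noting that the paper does not actually prove this lemma at all: it is quoted from \cite{Sun-Wang-Yong2022}. The closest in-paper comparison is the proof of \autoref{thm:vP-Pi:bound}, the companion statement for $\vP_{\scT}$ versus $\vP$, and your strategy mirrors its skeleton exactly --- reverse time to get a $T$-independent forward flow, establish qualitative convergence, write a perturbed Lyapunov equation for the difference, and close with variation of constants plus a bootstrap that uses qualitative convergence to tame the quadratic term (the paper does this via the substitution $p=1/\th$; your small-$\eta$ Gronwall argument is equivalent). Where you genuinely differ is in the derivation of the difference equation: you complete the square in both \rf{Ric:P} and \rf{ARE:P} relative to the \emph{common} feedback $\Th=-\cR(P)^{-1}\cS(P)$, which produces the clean identity $\dot\D+\cL_\Th[\D]=R_{\scT}$ with a sign-definite, purely quadratic remainder $R_{\scT}=(\Th-\Th_{\scT})^\top\cR(P_{\scT})(\Th-\Th_{\scT})\ges0$; the paper's analogous Step 2 instead expands brute force around the closed-loop matrix and tracks four cross terms. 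Your decomposition buys two things: the upper bound $P_{\scT}(t)\les P$ drops out immediately from positivity of the semigroup $e^{r\cL_\Th}$ applied to your variation-of-constants formula, and there is no exogenous forcing term (unlike the $\vP$-equation, where $g(\L)$ must be controlled by this very lemma). The price is that $\cL_\Th$ is the genuinely stochastic Lyapunov operator containing $C_\Th^\top MC_\Th$, so you cannot use matrix exponentials as the paper does for $\bar\cA$; your appeal to mean-square stability is the right fix, though it silently uses the standard equivalence between the paper's integrability definition of $L^2$-stabilizability and exponential decay of $\|e^{r\cL_\Th}\|$. One spot is thinner than it should be: the qualitative convergence $\bar P(s)\to P$, which you need to pick $s_0$ independent of $T$, does not follow from the bounds $0\les P_{\scT}\les P$ alone; you need monotonicity of $s\mapsto\bar P(s)$ (via the value-function interpretation, using that \ref{MFTP:A1} makes the running cost nonnegative) and then identification of the limit through the uniqueness assertion of \autoref{lmm:ARE-sol} --- precisely the content of Step 1 in the paper's proof of \autoref{thm:vP-Pi:bound}. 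With that ingredient spelled out, your argument is complete.
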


We conclude this section with a simple matrix inequality that will be needed later.

\begin{lemma}\label{lmm:matrix}
Let $M\in\dbR^{n\times m}$ and $K\in\dbS^m$. If $K>0$, then
$$ M(K+M^\top M)^{-1}M^\top \les I. $$
\end{lemma}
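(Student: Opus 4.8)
The plan is to obtain the inequality from the two-sided Schur complement criterion for positive semidefiniteness. First I set $A = K + M^\top M$ and record that $A > 0$: since $M^\top M \ges 0$ and $K > 0$ by hypothesis, $A$ is the sum of a positive definite matrix and a positive semidefinite one, hence positive definite and, in particular, invertible, so the expression $M(K+M^\top M)^{-1}M^\top$ makes sense.

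The key object is the symmetric $(n+m)\times(n+m)$ block matrix
$$ G = \begin{pmatrix} I_n & M \\ M^\top & A \end{pmatrix}. $$
I would first show $G \ges 0$ using the Schur complement of its top-left block: because $I_n > 0$, we have $G \ges 0$ if and only if the Schur complement $A - M^\top I_n^{-1} M = A - M^\top M = K$ is positive semidefinite, and this holds since $K > 0$. Having established $G \ges 0$, I then read off the conclusion from the Schur complement of the bottom-right block: because $A > 0$, the relation $G \ges 0$ forces $I_n - M A^{-1} M^\top \ges 0$, which is exactly the desired inequality $M(K+M^\top M)^{-1}M^\top \les I$.

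The result is elementary, so there is no serious obstacle; the only point demanding care is to invoke the Schur complement criterion in the correct direction at each step, using the positivity of $I_n$ to deduce $G \ges 0$ and the positivity of $A$ to extract the bound. If one prefers to avoid citing the Schur complement lemma, the same conclusion follows by a direct computation: for arbitrary $y \in \dbR^n$ put $z = A^{-1} M^\top y$, so that $M^\top y = Kz + M^\top M z$ and hence $y^\top M A^{-1} M^\top y = z^\top K z + |Mz|^2$; expanding $|y - Mz|^2 \ges 0$ gives $|y|^2 \ges 2\,z^\top K z + |Mz|^2$, and since $z^\top K z \ges 0$ these combine to yield $y^\top M A^{-1} M^\top y \les |y|^2$ for every $y$, which is the assertion.
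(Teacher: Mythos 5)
Your proof is correct, but it follows a genuinely different route from the paper's. The paper first reduces to the case $K=I$ by factoring $K=L^\top L$ and replacing $M$ by $ML^{-1}$, then invokes the push-through identity $M(I+M^\top M)^{-1}M^\top = MM^\top(I+MM^\top)^{-1}$, and finally regularizes $MM^\top$ as $N_\e=\e I+MM^\top>0$ so it can write $N_\e(I+N_\e)^{-1}=(N_\e^{-1}+I)^{-1}\les I$ and let $\e\to0$. Your argument dispenses with all three devices: the block matrix
$G=\begin{pmatrix} I & M \\ M^\top & K+M^\top M\end{pmatrix}$
is positive semidefinite because the Schur complement of its top-left block is exactly $K\ges0$, and then the Schur complement taken with respect to the (positive definite) bottom-right block yields $I-M(K+M^\top M)^{-1}M^\top\ges0$ directly, with general $K>0$ handled in one stroke and no limiting argument. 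Your fallback computation with $z=(K+M^\top M)^{-1}M^\top y$ and the expansion of $|y-Mz|^2\ges0$ is also correct (indeed it proves the slightly stronger bound $y^\top M(K+M^\top M)^{-1}M^\top y \les |y|^2 - z^\top Kz$) and makes the proof entirely self-contained, at the cost of looking less conceptual. What the paper's route buys is that it isolates the push-through identity, which is a reusable tool; what yours buys is brevity and the avoidance of both the congruence reduction and the $\e$-regularization.
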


\begin{proof}
Let $L\in\dbR^{m\times m}$ be an invertible matrix such that $K=L^\top L$. Then
$$ M(K+M^\top M)^{-1}M^\top = ML^{-1}[I+(ML^{-1})^\top (ML^{-1})]^{-1}(ML^{-1})^\top. $$
Thus, without loss of generality we may assume $K=I$. Observe that
$$ M(I+M^\top M)^{-1}M^\top = MM^\top(I+MM^\top)^{-1}. $$
For any $\e>0$, $N_\e \deq \e I+ MM^\top >0$ and hence
$$ N_\e(I+N_\e)^{-1} = (N_\e^{-1}+ I)^{-1} \les I. $$
Letting $\e\to0$ yields the desired result.
\end{proof}

\section{Main Results}\label{Sec:main-result}

We present the main results of the paper in this section.
The rigorous proofs are deferred to the subsequent sections.

\ms

First, let us introduce the static optimization problem associated with Problem (MFLQ)$_{\scT}$.
From the previous section we know (recall \autoref{lmm:ARE-sol} and the notation of \rf{notation:1}
and \rf{notation:2}) that under \ref{MFTP:A1} and \ref{MFTP:A2}, the ARE \rf{ARE:P} has a unique
positive definite solution $P$. Let
\begin{align}\label{def:sV}
\sV &\deq \big\{(x,u)\in\dbR^n\times\dbR^m ~|~ \hA x+\hB u + b = 0\big\},
\end{align}
and define a continuous function $F:\sV\to\dbR$ by
\begin{equation}\label{def:F}\begin{aligned}
F(x,u) &= \lan\hQ x,x\ran + \lan \hR u,u\ran + 2\lan\hS x,u\ran + 2\lan q,x\ran + 2\lan r,u\ran \\
       &\hp{=\ } +\lan P(\hC x+\hD u+\si),\hC x+\hD u+\si\ran.
\end{aligned}\end{equation}
The static optimization problem associated with Problem (MFLQ)$_{\scT}$ can be stated as follows.

\ms

{\bf Problem (O).} Find a pair $(x^*,u^*)\in\sV$ such that
$$ F(x^*,u^*)=\min_{(x,u)\in\sV}F(x,u) \equiv V. $$

\ss

For the solvability of Problem (O), we have the following result.

\begin{proposition}\label{prop:x*u*-unique}
Let {\rm\ref{MFTP:A1}--\ref{MFTP:A2}} hold. Then Problem {\rm(O)} has a unique solution.
Moreover,  $(x^*,u^*)$ is the solution of Problem {\rm(O)} if and only if there exists
a unique $\l^*\in\dbR^n$ such that
\begin{equation}\label{l+mu:exist}\left\{\begin{aligned}
& \hA x^* + \hB u^* + b = 0, \\
& \hA^\top\l^* + \hQ x^* + \hC^\top P(\hC x^*+\hD u^* + \si) + \hS^\top u^* + q =0, \\
& \hB^\top\l^* + \hR u^* + \hD^\top P(\hC x^*+\hD u^* + \si) + \hS x^* + r =0.
\end{aligned}\right.\end{equation}
\end{proposition}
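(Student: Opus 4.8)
The plan is to treat Problem (O) as a finite-dimensional convex program---minimizing the quadratic $F$ over the affine feasible set $\sV$---and then read off its first-order (Lagrange) conditions. I would begin by recording two consequences of the standing hypotheses. From \ref{MFTP:A1}, the block matrix $\begin{pmatrix}\hQ & \hS^\top \\ \hS & \hR\end{pmatrix}$ is positive definite, since $\hR>0$ and its Schur complement equals $\hQ-\hS^\top\hR^{-1}\hS>0$; and since $P>0$ by \autoref{lmm:ARE-sol}, the Hessian of $F$,
$$ 2\begin{pmatrix}\hQ & \hS^\top \\ \hS & \hR\end{pmatrix} + 2\begin{pmatrix}\hC^\top \\ \hD^\top\end{pmatrix}P\begin{pmatrix}\hC & \hD\end{pmatrix}, $$
is positive definite, so $F$ is strictly convex and coercive on $\dbR^n\times\dbR^m$. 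From \ref{MFTP:A2}, I would fix a stabilizer $\wh\Th$ of \rf{ODE-sys}; then $\hA+\hB\wh\Th$ is Hurwitz, hence invertible, and for any $y\in\dbR^n$ the choice $x=(\hA+\hB\wh\Th)^{-1}y$, $u=\wh\Th x$ gives $\hA x+\hB u=y$. Thus $[\hA\ \hB]$ has full row rank $n$, which does double duty: it shows $\sV\neq\emptyset$ (as $-b$ lies in the range of $[\hA\ \hB]$) and that $\begin{pmatrix}\hA^\top \\ \hB^\top\end{pmatrix}$ is injective.

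With these facts, existence and uniqueness of a minimizer are immediate: $F$ is continuous, strictly convex, and coercive, while $\sV$ is a nonempty closed convex (affine) set, so $F$ attains a unique minimum there. For the characterization I would use that $\sV$ has tangent space $T=\ker[\hA\ \hB]$, with $T^\perp=\mathrm{Range}\begin{pmatrix}\hA^\top \\ \hB^\top\end{pmatrix}$. Necessity: if $(x^*,u^*)$ solves (O), then $t\mapsto F\big((x^*,u^*)+t(\xi,\eta)\big)$ is minimized at $t=0$ for every $(\xi,\eta)\in T$, whence $\nabla F(x^*,u^*)\perp T$, i.e. $\nabla F(x^*,u^*)\in T^\perp$. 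Writing $\nabla F(x^*,u^*)=\begin{pmatrix}\hA^\top \\ \hB^\top\end{pmatrix}(-2\l^*)$ and computing the gradient explicitly reproduces precisely the second and third equations of \rf{l+mu:exist}, while feasibility of $(x^*,u^*)$ gives the first; injectivity of $\begin{pmatrix}\hA^\top \\ \hB^\top\end{pmatrix}$ then forces $\l^*$ to be unique. Sufficiency: conversely, if some $\l^*$ satisfies \rf{l+mu:exist}, the last two equations say $\nabla F(x^*,u^*)\in T^\perp$, so $\lan\nabla F(x^*,u^*),(\xi,\eta)\ran=0$ for all $(\xi,\eta)\in T$; convexity of $F$ then yields $F(x,u)\ges F(x^*,u^*)$ for every $(x,u)=(x^*,u^*)+(\xi,\eta)\in\sV$, so $(x^*,u^*)$ solves (O).

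The routine ingredients here are the explicit gradient computation and the Schur-complement positivity. The one step that genuinely invokes the control-theoretic hypothesis is extracting full row rank of $[\hA\ \hB]$ from the stabilizability in \ref{MFTP:A2}, and I expect this to be the main point to get right, since it is exactly what simultaneously guarantees that the constraint set is nonempty and that the multiplier $\l^*$ is unique. Everything else is the standard first-order theory for an affinely constrained strictly convex quadratic.
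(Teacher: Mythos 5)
Your proposal is correct and is essentially the argument the paper intends: the paper omits the proof of \autoref{prop:x*u*-unique}, deferring to \cite{Sun-Wang-Yong2022} and \cite{Yong2018}, where exactly this standard treatment of a strictly convex, coercive quadratic over a nonempty affine set is carried out (Schur complement plus $P>0$ for positive definiteness of the Hessian, stabilizability giving full row rank of $[\hA\ \hB]$ and hence both feasibility of $\sV$ and injectivity of the adjoint for uniqueness of $\l^*$, and the first-order Lagrange conditions for the characterization). Nothing is missing.
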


The proof of \autoref{prop:x*u*-unique} is similar to the case without mean-field terms.
We omit it here and  refer the reader to \cite{Sun-Wang-Yong2022} (or \cite{Yong2018}) for details.

\ms

Let $(x^*,u^*)$ be the solution of Problem (O) and $\l^*\in\dbR^n$ the vector in \rf{l+mu:exist}. Set
\begin{align}\label{si*+Th}
\si^* = \hC x^*+ \hD u^* + \si, \q \Th =  -\cR(P)^{-1}\cS(P),
\end{align}
where $P$ is the solution to the ARE \rf{ARE:P}. Let $X^*(\cd)$ be the solution to the SDE
\begin{equation}\label{SDE:X*}\left\{\begin{aligned}
dX^*(t) &= (A+B\Th)X^*(t)dt + \big[(C+D\Th)X^*(t)+\si^*\big]dW(t), \q t\ges0,\\
 X^*(0) &= 0,
\end{aligned}\right.\end{equation}
and define
\begin{align}\label{bmX+bmu}
\bm X^*(t) \deq X^*(t) + x^*, \q \bm u^*(t) \deq \Th X^*(t) + u^*.
\end{align}
Clearly, $\dbE[X^*(t)]=0$. Hence, one has $\dbE[\bm X^*(t)]=x^*$ and $\dbE[\bm u^*(t)]=u^*$ for all $t\ges0$.

\ms

We now state the main result of the paper, which establishes the exponential turnpike
property of Problem (MFLQ)$_{\scT}$.

\begin{theorem}\label{thm:main}
Let {\rm\ref{MFTP:A1}--\ref{MFTP:A2}} hold. Let $(X^*_{\scT}(\cd),u^*_{\scT}(\cd))$ be the
optimal pair of Problem {\rm(MFLQ)$_{\scT}$} for the initial state $x$. Then there exist
constants $K,\l>0$, independent of $T$, such that
\begin{align}\label{X*-X*T}
\dbE|X^*_{\scT}(t)-\bm X^*(t)|^2 + \dbE|u^*_{\scT}(t)-\bm u^*(t)|^2
\les K\[e^{-\l t} + e^{-\l(T-t)}\], \q\forall t\in[0,T].
\end{align}
\end{theorem}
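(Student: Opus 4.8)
The plan is to exploit the mean--fluctuation decomposition, since in a mean-field LQ problem the mean and the centered fluctuation of the optimal state obey decoupled dynamics. First I would record the closed-loop form of the optimal pair (standard mean-field LQ theory, cf.\ \cite{Sun-Yong2020}): writing $\bar X_{\scT}(t)=\dbE[X^*_{\scT}(t)]$, $\bar u_{\scT}(t)=\dbE[u^*_{\scT}(t)]$, and $Y_{\scT}=X^*_{\scT}-\bar X_{\scT}$, one has
$$u^*_{\scT}(t)=\Th_{\scT}(t)\,Y_{\scT}(t)+\wh\Th_{\scT}(t)\,\bar X_{\scT}(t)+v_{\scT}(t),$$
where $\Th_{\scT}(t)=-\cR(P_{\scT}(t))^{-1}\cS(P_{\scT}(t))$, $\wh\Th_{\scT}(t)=-\wh\cR(P_{\scT}(t))^{-1}\wh\cS(P_{\scT}(t),\vP_{\scT}(t))$, and $v_{\scT}(\cd)$ is the deterministic offset produced by the first-order adjoint equations carrying the data $b,\si,q,r$. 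Taking expectations annihilates $Y_{\scT}$, so $\bar X_{\scT}$ solves the deterministic ODE $\dot{\bar X}_{\scT}=(\hA+\hB\wh\Th_{\scT})\bar X_{\scT}+\hB v_{\scT}+b$ with $\bar X_{\scT}(0)=x$, while the centered part solves
$$dY_{\scT}=(A+B\Th_{\scT})Y_{\scT}\,dt+\big[(C+D\Th_{\scT})Y_{\scT}+\hC\bar X_{\scT}+\hD\bar u_{\scT}+\si\big]dW,\q Y_{\scT}(0)=0.$$
Because $Y_{\scT}$ and $X^*$ both have zero mean while $\bar X_{\scT}-x^*$ is deterministic, the cross terms drop and
$$\dbE|X^*_{\scT}(t)-\bm X^*(t)|^2=|\bar X_{\scT}(t)-x^*|^2+\dbE|Y_{\scT}(t)-X^*(t)|^2,$$
with the same Pythagorean splitting for the control. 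It therefore suffices to bound the mean error and the fluctuation error separately.

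For the mean error $m_{\scT}:=\bar X_{\scT}-x^*$, I would show that $(m_{\scT},v_{\scT})$ solves a linear forward--backward ODE system: $m_{\scT}$ runs forward, forced by the offset mismatch $v_{\scT}-v_\i$ (with stationary offset $v_\i=u^*-\wh\Th x^*$, so that the stationary relation $\hA x^*+\hB u^*+b=0$ of \rf{l+mu:exist} holds), while $v_{\scT}-v_\i$ runs backward, forced by the Riccati mismatches $P_{\scT}-P$ and $\vP_{\scT}-\vP$. Feeding in \autoref{lmm:Si-P:bound}, its $\vP$-counterpart $|\vP_{\scT}(t)-\vP|\les Ke^{-\l(T-t)}$ established in \autoref{Sec:stability}, and the exponential stability of $\hA+\hB\wh\Th$ supplied by \autoref{lmm:ARE-sol}, a variation-of-constants estimate on this forward--backward pair yields
$$|\bar X_{\scT}(t)-x^*|+|\bar u_{\scT}(t)-u^*|\les K\big[e^{-\l t}+e^{-\l(T-t)}\big];$$
here the $e^{-\l t}$ term is the forward decay of the genuine initial mismatch $m_{\scT}(0)=x-x^*$, and the $e^{-\l(T-t)}$ term is the backward decay carried by the Riccati and terminal mismatches. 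The characterization in \autoref{prop:x*u*-unique} is exactly what pins down $v_\i$ and the stationary adjoint $\l^*$.

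For the fluctuation error $\xi_{\scT}:=Y_{\scT}-X^*$, subtracting \rf{SDE:X*} from the $Y_{\scT}$-equation and using $\si^*=\hC x^*+\hD u^*+\si$ gives an SDE driven by the \emph{same} stable closed-loop coefficients $(A+B\Th,\,C+D\Th)$, with zero initial value and forcing
$$B(\Th_{\scT}-\Th)Y_{\scT},\q D(\Th_{\scT}-\Th)Y_{\scT}+\hC\,m_{\scT}+\hD(\bar u_{\scT}-u^*).$$
Here $\Th_{\scT}-\Th=O(e^{-\l(T-t)})$ by \autoref{lmm:Si-P:bound} and the smooth dependence of $\cR^{-1}\cS$ on $P$, while $m_{\scT}$ and $\bar u_{\scT}-u^*$ are $O(e^{-\l t}+e^{-\l(T-t)})$ from the previous step; a preliminary uniform-in-$T$ bound $\sup_t\dbE|Y_{\scT}(t)|^2\les K$ makes the forcing small in $L^2$. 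Combining the quantitative $L^2$-exponential stability of the closed-loop SDE (from $\Th$ being an $L^2$-stabilizer, \autoref{lmm:ARE-sol}) with the zero initial datum, an It\^o--Gr\"onwall estimate gives $\dbE|\xi_{\scT}(t)|^2\les K[e^{-\l t}+e^{-\l(T-t)}]$, after which $\Th_{\scT}Y_{\scT}-\Th X^*=\Th\xi_{\scT}+(\Th_{\scT}-\Th)Y_{\scT}$ inherits the same bound. Assembling the two parts through the Pythagorean identity delivers \rf{X*-X*T}.

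The hard part will be the offset analysis in the mean step together with its supporting Riccati stability. Quantifying the exponential convergence $v_{\scT}\to v_\i$ requires the decay $|\vP_{\scT}-\vP|\les Ke^{-\l(T-t)}$, whose proof (deferred to \autoref{Sec:stability}) is the genuinely new ingredient relative to the non--mean-field case, since $\vP_{\scT}$ solves a Riccati equation coupled to $P_{\scT}$ through $\wh\cQ,\wh\cS,\wh\cR$; \autoref{lmm:matrix} is the kind of estimate needed to keep the feedback gains uniformly bounded there. A secondary technical point is establishing the uniform-in-$T$ $L^2$ bound on $Y_{\scT}$ and a truly quantitative (not merely qualitative) $L^2$-decay rate for the closed-loop SDE, so that forcing of order $e^{-\l t}+e^{-\l(T-t)}$ propagates to the same rate for $\xi_{\scT}$.
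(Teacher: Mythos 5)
Your proposal is correct and takes essentially the same route as the paper: the paper's proof works with $V_{\scT}=\wX_{\scT}-\dbE[\wX_{\scT}]-X^*$, which is exactly your $\xi_{\scT}=Y_{\scT}-X^*$, first bounds the mean error $\dbE[\wX_{\scT}]$ (\autoref{prop:EwX:bound}) and the offset terms (\autoref{lmm:th:bound}), and then runs a Lyapunov--Gronwall estimate on $V_{\scT}$, with the only organizational difference being that the paper keeps the time-varying closed-loop coefficients $\cA_{\scT},\cC_{\scT}$ and writes the forcing as $(\cA_{\scT}-\cA)X^*$, $(\cC_{\scT}-\cC)X^*$, so that only the uniform bound $\dbE|X^*(t)|^2\les K$ (\autoref{prop:EX*bound}) is needed instead of your preliminary uniform bound on $Y_{\scT}$. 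The key ingredients you single out --- the exponential Riccati convergence $|\vP_{\scT}(t)-\vP|\les Ke^{-\l(T-t)}$ (\autoref{thm:vP-Pi:bound}) as the genuinely new mean-field ingredient, the decay of the deterministic offsets, and the stability of the closed-loop matrices from \autoref{lmm:ARE-sol} --- are precisely those the paper uses.
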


We call \rf{X*-X*T} the {\it strong exponential turnpike property} for the optimal pair $(X^*_{\scT}(\cd)$, $u^*_{\scT}(\cd))$.
The above result has several consequences. The first corollary establishes
the {\it strong integral} and the {\it mean-square turnpike properties} for Problem (MFLQ)$_{\scT}$, whose proof is direct.

\begin{corollary}\label{crlry:jifen-TP}
Let {\rm\ref{MFTP:A1}--\ref{MFTP:A2}} hold. Then,
\begin{equation*}
\lim_{T\to\i}{1\over T}\int_0^T\dbE\[|X^*_{\scT}(t)-\bm X^*(t)|^2+|u^*_{\scT}(t)-\bm u^*(t)|^2\]dt=0,
\end{equation*}
Consequently,
\begin{equation*}
\lim_{T\to\i}{1\over T}\int_0^T\[\big|\dbE[X^*_{\scT}(t)]-x^*\big|^2+\big|\dbE[u^*_{\scT}(t)]-u^*\big|^2\]dt=0.
\end{equation*}
\end{corollary}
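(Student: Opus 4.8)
The plan is to deduce both limits directly from the strong exponential turnpike estimate \rf{X*-X*T} in \autoref{thm:main}, the only genuine input being the elementary integration of the two exponential terms. First I would invoke \rf{X*-X*T} to dominate the integrand pointwise, obtaining
\begin{align*}
{1\over T}\int_0^T\dbE\[|X^*_{\scT}(t)-\bm X^*(t)|^2 + |u^*_{\scT}(t)-\bm u^*(t)|^2\]dt
\les {K\over T}\int_0^T\[e^{-\l t} + e^{-\l(T-t)}\]dt.
\end{align*}
Then I would compute the two elementary integrals $\int_0^T e^{-\l t}dt = (1-e^{-\l T})/\l \les 1/\l$ and $\int_0^T e^{-\l(T-t)}dt = (1-e^{-\l T})/\l \les 1/\l$, each bounded by $1/\l$ uniformly in $T$. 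This yields the upper bound $2K/(\l T)$, which tends to $0$ as $T\to\i$ and establishes the first assertion.

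For the second assertion I would use the fact, recorded just after \rf{bmX+bmu}, that $\dbE[\bm X^*(t)]=x^*$ and $\dbE[\bm u^*(t)]=u^*$ for every $t\ges0$. Consequently $\dbE[X^*_{\scT}(t)]-x^* = \dbE[X^*_{\scT}(t)-\bm X^*(t)]$, and likewise for the control, so Jensen's inequality (equivalently, $|\dbE Z|^2\les\dbE|Z|^2$) gives the pointwise domination
\begin{align*}
\big|\dbE[X^*_{\scT}(t)]-x^*\big|^2+\big|\dbE[u^*_{\scT}(t)]-u^*\big|^2
\les \dbE\[|X^*_{\scT}(t)-\bm X^*(t)|^2+|u^*_{\scT}(t)-\bm u^*(t)|^2\].
\end{align*}
Integrating over $[0,T]$ and dividing by $T$, the second time average is dominated by the first, so it too converges to $0$ by the first assertion.

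Since every step is a one-line estimate, I do not anticipate any serious obstacle; this matches the remark in the statement that the proof is direct. The only point requiring care is to keep the constants $K,\l$ independent of $T$ (which is precisely what \autoref{thm:main} guarantees), so that the bound $2K/(\l T)$ is a true $O(1/T)$ quantity rather than one whose numerator secretly grows with the horizon.
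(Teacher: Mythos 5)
Your proposal is correct and is exactly the ``direct'' argument the paper has in mind: it cites \autoref{thm:main}, integrates the two exponentials to get an $O(1/T)$ bound, and deduces the second limit from the first via $\dbE[\bm X^*(t)]=x^*$, $\dbE[\bm u^*(t)]=u^*$ and Jensen's inequality. The paper omits the proof precisely because it is this one-line estimate, so there is nothing to add.
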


The second corollary shows that for any initial state $x$, the value $V_{\scT}(x)$ of Problem (MFLQ)$_{\scT}$
converges to the minimum of Problem (O) in the time-average sense.

\begin{corollary}\label{crlry:value}
Let {\rm\ref{MFTP:A1}--\ref{MFTP:A2}} hold. Then
$$\lim_{T\to\i} {1\over T}V_{\scT}(x) = V, \q\forall x\in\dbR^n. $$
\end{corollary}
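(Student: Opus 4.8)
The plan is to start from $V_{\scT}(x)=J_{\scT}(x;u^*_{\scT}(\cd))$ and to compare the running cost evaluated along the optimal pair $(X^*_{\scT}(\cd),u^*_{\scT}(\cd))$ with the running cost evaluated along the turnpike process $(\bm X^*(\cd),\bm u^*(\cd))$ from \rf{bmX+bmu}. Let $\ell_{\scT}(t)$ denote the expected integrand of $J_{\scT}$ at time $t$ along the optimal pair, and $\ell^*(t)$ the expected integrand along $(\bm X^*(t),\bm u^*(t))$ (where in the mean-field terms $\dbE[\bm X^*]=x^*$ and $\dbE[\bm u^*]=u^*$). Then I would write
$$\frac1T V_{\scT}(x)=\frac1T\int_0^T\ell^*(t)\,dt+\frac1T\int_0^T\big[\ell_{\scT}(t)-\ell^*(t)\big]\,dt,$$
and show that the first term converges to $V$ while the second converges to $0$.

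For the first term, substitute $\bm X^*=X^*+x^*$ and $\bm u^*=\Th X^*+u^*$ with $\dbE[X^*]=0$ into $\ell^*(t)$. The cross terms between the means and $X^*$ vanish in expectation, so $\ell^*(t)$ splits into a time-invariant \emph{mean part}, which collects exactly the first line of $F$ in \rf{def:F} and hence equals $F(x^*,u^*)-\lan P\si^*,\si^*\ran=V-\lan P\si^*,\si^*\ran$, plus a \emph{fluctuation part} $\dbE\lan M_\Th X^*(t),X^*(t)\ran$ with $M_\Th\deq Q+S^\top\Th+\Th^\top S+\Th^\top R\Th$. To evaluate the time-average of the fluctuation part, apply It\^o's formula to $\lan PX^*(t),X^*(t)\ran$ along \rf{SDE:X*} and use the Lyapunov form of the ARE \rf{ARE:P}, namely
$$P(A+B\Th)+(A+B\Th)^\top P+(C+D\Th)^\top P(C+D\Th)+M_\Th=0,$$
which follows from \rf{ARE:P} and the definition of $\Th$ in \rf{si*+Th}. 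Because $\dbE[X^*(t)]=0$ kills the cross term, this yields ${d\over dt}\dbE\lan PX^*(t),X^*(t)\ran=-\dbE\lan M_\Th X^*(t),X^*(t)\ran+\lan P\si^*,\si^*\ran$, and integrating with $X^*(0)=0$ gives
$$\frac1T\int_0^T\dbE\lan M_\Th X^*(t),X^*(t)\ran\,dt=\lan P\si^*,\si^*\ran-\frac1T\dbE\lan PX^*(T),X^*(T)\ran.$$
Since $\Th$ stabilizes \rf{SDE-sys} by \autoref{lmm:ARE-sol}, $\dbE|X^*(T)|^2$ stays bounded in $T$, so the boundary term vanishes in the average and the fluctuation part contributes exactly $\lan P\si^*,\si^*\ran$. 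Adding the mean part gives ${1\over T}\int_0^T\ell^*(t)\,dt\to V$.

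For the error term, set $\d X=X^*_{\scT}-\bm X^*$ and $\d u=u^*_{\scT}-\bm u^*$. As the integrand of $J_{\scT}$ is quadratic, $\ell_{\scT}(t)-\ell^*(t)$ is a sum of terms quadratic in $(\d X,\d u)$ and terms bilinear in $(\d X,\d u)$ and $(\bm X^*,\bm u^*)$. The quadratic part is controlled directly by \autoref{crlry:jifen-TP}, which gives ${1\over T}\int_0^T\dbE[|\d X|^2+|\d u|^2]\,dt\to0$; the bilinear part is handled by Cauchy--Schwarz in the product measure ${1\over T}dt\times d\dbP$, using that same vanishing together with $\sup_t\big(\dbE|\bm X^*(t)|^2+\dbE|\bm u^*(t)|^2\big)<\i$, which again follows from the $L^2$-stability of $\Th$. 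Hence the error term tends to $0$.

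The essential (and only nonroutine) step is the fluctuation computation: recognizing that the time-average of the purely stochastic part of the running cost equals $\lan P\si^*,\si^*\ran$, i.e.\ precisely the extra term that distinguishes the correct static functional $F$ in \rf{def:F} from its deterministic analogue. The It\^o-plus-Lyapunov identity makes this transparent and, crucially, shows that the boundary contribution ${1\over T}\dbE\lan PX^*(T),X^*(T)\ran$ — which is merely bounded rather than exponentially small — still disappears in the time-average. Once this is in place, everything else reduces to \autoref{thm:main} and \autoref{crlry:jifen-TP}.
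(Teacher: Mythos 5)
Your proposal is correct and takes essentially the same approach as the paper: the paper likewise passes from the optimal pair to $(\bm X^*(\cd),\bm u^*(\cd))$ using \autoref{crlry:jifen-TP} together with the uniform second-moment bounds from \autoref{thm:main} and \autoref{prop:EX*bound}, and then evaluates the time-average along the turnpike pair by splitting it (using $\dbE[X^*(t)]=0$) into the mean part $\lan\hQ x^*,x^*\ran+2\lan\hS x^*,u^*\ran+\lan\hR u^*,u^*\ran+2\lan q,x^*\ran+2\lan r,u^*\ran = V-\lan P\si^*,\si^*\ran$ and the fluctuation part, which is shown to average to $\lan P\si^*,\si^*\ran$ via exactly your It\^o-plus-Lyapunov identity for the ARE \rf{ARE:P}, with the boundary term killed by boundedness of $\dbE|X^*(T)|^2$. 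The only difference is the cosmetic ordering of the two steps.
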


Let $(X^*_{\scT}(\cd),u^*_{\scT}(\cd))$ be the optimal pair of Problem (MFLQ)$_{\scT}$ for the initial state $x$,
and let $(Y^*_{\scT}(\cd),Z^*_{\scT}(\cd))$ be the {\it adjoint process}, i.e., the adapted solution
to the following mean-field backward SDE:
\begin{equation}\label{eqn:adjoint}\left\{\begin{aligned}
&dY^*_{\scT}(t) = -\,\big\{A^\top Y^*_{\scT}(t)+\bA^\top\dbE[Y^*_{\scT}(t)]+C^\top Z^*_{\scT}(t)+\bC^\top\dbE[Z^*_{\scT}(t)] + QX^*_{\scT}(t) \\
&\hp{dY^*_{\scT}(t)=} +\bQ\dbE[X^*_{\scT}(t)]+ S^\top u^*_{\scT}(t) + \bS^\top\dbE[u^*_{\scT}(t)]+q\big\} dt + Z^*_{\scT}(t)dW(t), \\
&Y^*_{\scT}(T) = 0.
\end{aligned}\right.\end{equation}
%
%satisfies the following stationary condition:
%%
%\begin{equation}\label{stationary}\begin{aligned}
%%
%&B^\top Y^*_{\scT}(t)+\bB^\top\dbE[Y^*_{\scT}(t)]+D^\top Z^*_{\scT}(t)+\bD^\top\dbE[Z^*_{\scT}(t)] + SX^*_{\scT}(t) + \bS\dbE[X^*_{\scT}(t)] \\
%%
%&\hp{B^\top Y^*_{\scT}(t)} + Ru^*_{\scT}(t) + \bR\dbE[u^*_{\scT}(t)] + r = 0, \q\ae~t\in[0,T],~\as
%\end{aligned}\end{equation}

The next corollary shows that a strong exponential turnpike property also holds for the adjoint process.

\begin{corollary}\label{crlry:TP-adjoint}
Let {\rm\ref{MFTP:A1}--\ref{MFTP:A2}} hold. Let
\begin{align}\label{bmY+bmZ}
\bm Y^*(t) \deq PX^*(t) + \l^*, \q \bm Z^*(t) \deq P(C+D\Th) X^*(t) + P\si^*.
\end{align}
Then for some constants $K,\l>0$ independent of $T$,
\begin{align}\label{TP:Y*T-Z*T}
\dbE|Y^*_{\scT}(t)-\bm Y^*(t)|^2 + \dbE|Z^*_{\scT}(t)-\bm Z^*(t)|^2
\les K\[e^{-\l t} + e^{-\l(T-t)}\], \q\forall t\in[0,T].
\end{align}
\end{corollary}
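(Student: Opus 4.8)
The plan is to reduce the corollary to the state-control turnpike estimate \rf{X*-X*T} by invoking the closed-loop (feedback) representation of the optimal adjoint process. From the theory of mean-field LQ problems, the adjoint pair admits the representation
\begin{align*}
Y^*_{\scT}(t) &= P_{\scT}(t)\big(X^*_{\scT}(t)-\dbE[X^*_{\scT}(t)]\big) + \vP_{\scT}(t)\dbE[X^*_{\scT}(t)] + \eta_{\scT}(t), \\
Z^*_{\scT}(t) &= P_{\scT}(t)\big(CX^*_{\scT}(t)+\bC\dbE[X^*_{\scT}(t)]+Du^*_{\scT}(t)+\bD\dbE[u^*_{\scT}(t)]+\si\big),
\end{align*}
where $(P_{\scT}(\cd),\vP_{\scT}(\cd))$ solve the differential Riccati equations \rf{Ric:P}--\rf{Ric:Pi} and $\eta_{\scT}(\cd)$ is the deterministic offset, solving a linear ODE with terminal value $\eta_{\scT}(T)=0$. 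Using \rf{SDE:X*}, \rf{si*+Th}, the ARE \rf{ARE:P} and the optimality system \rf{l+mu:exist}, I would check that $\bm Y^*,\bm Z^*$ in \rf{bmY+bmZ} are precisely the stationary counterparts of this representation: since $\dbE[\bm X^*(t)]=x^*$ and $\bm X^*(t)-x^*=X^*(t)$, we have $\bm Y^*(t)=P\big(\bm X^*(t)-x^*\big)+\vP x^*+\eta^*$ with $\eta^*\deq\l^*-\vP x^*$, and $\bm Z^*(t)=P\big(C\bm X^*(t)+\bC x^*+D\bm u^*(t)+\bD u^*+\si\big)$; i.e.\ the limit is obtained by replacing $P_{\scT}\to P$, $\vP_{\scT}\to\vP$, $\eta_{\scT}\to\eta^*$, and $(X^*_{\scT},u^*_{\scT})\to(\bm X^*,\bm u^*)$.

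Next I would subtract and estimate term by term. Writing $\tilde X^*_{\scT}=X^*_{\scT}-\dbE[X^*_{\scT}]$, the difference $Y^*_{\scT}(t)-\bm Y^*(t)$ splits into $[P_{\scT}(t)-P]\tilde X^*_{\scT}(t)$, $\ P\big(\tilde X^*_{\scT}(t)-X^*(t)\big)$, $\ [\vP_{\scT}(t)-\vP]\dbE[X^*_{\scT}(t)]$, $\ \vP\big(\dbE[X^*_{\scT}(t)]-x^*\big)$, and $\eta_{\scT}(t)-\eta^*$. The factors $P_{\scT}(t)-P$ and $\vP_{\scT}(t)-\vP$ are $O(e^{-\l(T-t)})$ by \autoref{lmm:Si-P:bound} and its analogue for $\vP$ proved in \autoref{Sec:stability}; these are multiplied by $\tilde X^*_{\scT}$ and $\dbE[X^*_{\scT}]$, which are bounded in $L^2$ uniformly in $(T,t)$ — a bound that follows from \rf{X*-X*T} together with the $L^2$-stability of \rf{SDE:X*} (so that $\sup_t\dbE|\bm X^*(t)|^2<\i$ and hence $\sup_t\dbE|X^*_{\scT}(t)|^2<\i$ uniformly in $T$). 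Since $\bm X^*(t)-x^*=X^*(t)$, the second and fourth terms equal $P\big[(X^*_{\scT}(t)-\bm X^*(t))-\dbE(X^*_{\scT}(t)-\bm X^*(t))\big]$ and $\vP\,\dbE[X^*_{\scT}(t)-\bm X^*(t)]$, both dominated in mean square by $\dbE|X^*_{\scT}(t)-\bm X^*(t)|^2$ and hence controlled by \rf{X*-X*T}; this is where the $e^{-\l t}$ boundary layer of the final estimate originates. The $Z$-component is handled the same way and is in fact simpler, since it carries no offset: one controls the factor $P_{\scT}(t)-P$ as above and bounds the differences in the bracket by $\dbE|X^*_{\scT}(t)-\bm X^*(t)|^2+\dbE|u^*_{\scT}(t)-\bm u^*(t)|^2$ via \rf{X*-X*T}.

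The main obstacle is the exponential convergence of the offset, namely $|\eta_{\scT}(t)-\eta^*|\les Ke^{-\l(T-t)}$ with $K,\l$ independent of $T$. The difference $\eta_{\scT}-\eta^*$ satisfies a linear ODE whose leading coefficient is (up to the convergent perturbations $P_{\scT}-P,\vP_{\scT}-\vP$) the transpose of the stable closed-loop matrix $\hA+\hB\wh\Th$ from \autoref{lmm:ARE-sol}, with terminal value $\eta_{\scT}(T)-\eta^*=-\eta^*$ and a forcing term driven by $P_{\scT}(\cd)-P$ and $\vP_{\scT}(\cd)-\vP$. Running a variation-of-constants estimate backward against the exponential dichotomy of this matrix, the propagated terminal value contributes $e^{-\l(T-t)}$ directly, while the forcing — itself of order $e^{-\l(T-t)}$ — integrates to at most $(T-t)e^{-\l(T-t)}$, which is absorbed into $e^{-\l'(T-t)}$ after a harmless reduction of the rate. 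Combining this with the term-by-term bounds above and the elementary inequality $|\sum_i a_i|^2\les C\sum_i|a_i|^2$, taking expectations yields \rf{TP:Y*T-Z*T}. The delicate points are thus the verification of the stationary identity for $(\bm Y^*,\bm Z^*)$ and the careful bookkeeping of the boundary-layer estimate for $\eta_{\scT}$, the remaining steps being routine given \rf{X*-X*T}, \autoref{lmm:Si-P:bound}, and the $\vP$-stability of \autoref{Sec:stability}.
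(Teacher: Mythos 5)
Your proposal is correct, and its skeleton---a decoupling representation of $(Y^*_{\scT},Z^*_{\scT})$ through the Riccati pair $(P_{\scT},\vP_{\scT})$ plus a deterministic offset, followed by term-by-term estimates from \autoref{lmm:Si-P:bound}, \autoref{thm:vP-Pi:bound}, \autoref{thm:main}, and the uniform $L^2$ bounds---is the same as the paper's. The organizational difference is that the paper first translates everything by $(x^*,u^*,\l^*)$: it works with $\wY_{\scT}=Y^*_{\scT}-\l^*$ and $\wX_{\scT}=X^*_{\scT}-x^*$, and writes $\wY_{\scT}(t)=P_{\scT}(t)\{\wX_{\scT}(t)-\dbE[\wX_{\scT}(t)]\}+\vP_{\scT}(t)\dbE[\wX_{\scT}(t)]+\wh\f_{\scT}(t)$, so the only offset that appears is $\wh\f_{\scT}$ of \rf{BODE:bf}, whose bound $|\wh\f_{\scT}(t)|\les Ke^{-\l(T-t)}$ is already on record as \autoref{lmm:th:bound}; the corollary then follows by substitution and the known estimates. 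You instead stay in the original variables, which forces you to introduce the stationary offset $\eta^*=\l^*-\vP x^*$, verify via \rf{l+mu:exist} and the ARE \rf{ARE:Pi} that it is the equilibrium of the $\eta_{\scT}$-equation, and re-run a backward variation-of-constants estimate (your remark about reducing the exponential rate to absorb the factor $T-t$ is the correct fix there). All of this is sound, but it duplicates available work: since $\eta_{\scT}(t)=\wh\f_{\scT}(t)+\l^*-\vP_{\scT}(t)x^*$, your ``main obstacle'' $|\eta_{\scT}(t)-\eta^*|\les Ke^{-\l(T-t)}$ is exactly the combination of \autoref{lmm:th:bound} and \autoref{thm:vP-Pi:bound} applied to $\wh\f_{\scT}(t)$ and $[\vP-\vP_{\scT}(t)]x^*$, so no fresh ODE analysis is actually needed. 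What your route buys is that the stationary identity for $(\bm Y^*,\bm Z^*)$ is made explicit and checked against the optimality system, a step the paper compresses into ``a tedious but straightforward calculation''; what the paper's route buys is brevity, since after the translation every ingredient of the estimate is a quotable prior result.
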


\section{Stability of Riccati Equations}\label{Sec:stability}

As mentioned in \autoref{Sec:Pre}, there is a certain connection between the solutions to
the differential Riccati equation \rf{Ric:Pi} and the ARE \rf{ARE:Pi}.
We now examine this connection.
The main result of this section is as follows, which plays a central role in the study
of turnpike properties.

\begin{theorem}\label{thm:vP-Pi:bound}
Let {\rm\ref{MFTP:A1}--\ref{MFTP:A2}} hold.
Let $\vP_{\scT}(\cd)$ and $\vP$ be the unique solutions of \rf{Ric:Pi} and \rf{ARE:Pi}, respectively.
Then there exist positive constants $K,\l>0$ such that
\begin{equation}\label{Pi:shoulian}
|\vP_{\scT}(t)-\vP| \les Ke^{-\l(T-t)},  \q\forall 0\les t\les T<\i.
\end{equation}
\end{theorem}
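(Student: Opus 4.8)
The plan is to study the matrix difference $\D(t)\deq\vP_{\scT}(t)-\vP$ and to show that it decays like $e^{-\l(T-t)}$ as one moves backward from the terminal time. Throughout I write $E(t)\deq P_{\scT}(t)-P$; by \autoref{lmm:Si-P:bound} we already know $|E(t)|\les Ke^{-\l(T-t)}$, so $E(\cd)$ is uniformly bounded and exponentially small away from $t=T$, and this is the only information about $P_{\scT}(\cd)$ that enters the analysis. The geometric object that drives everything is the stabilizing feedback $\wh\Th=-\wh\cR(P)^{-1}\wh\cS(P,\vP)$ supplied by \autoref{lmm:ARE-sol}: the closed-loop matrix $\wh\cA\deq\hA+\hB\wh\Th$ is Hurwitz, hence $\|e^{\wh\cA s}\|\les Me^{-\a s}$ for some $M,\a>0$.

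First I would record a uniform a priori bound $0\les\vP_{\scT}(t)\les\Lambda I$ with $\Lambda$ independent of $t$ and $T$. The lower bound is \autoref{lmm:Ric-sol}. For the upper bound, observe that \rf{Ric:Pi} is itself a symmetric Riccati differential equation for $\vP_{\scT}$ whose data are the stabilizable pair $(\hA,\hB)$ (Assumption \ref{MFTP:A2}) together with the weights $\hC^\top P_{\scT}\hC+\hQ$, $\hD^\top P_{\scT}\hC+\hS$, and $\hR+\hD^\top P_{\scT}\hD$; since $P_{\scT}(\cd)$ is uniformly bounded, the weight matrix is uniformly bounded, its control part satisfies $\hR+\hD^\top P_{\scT}\hD\ges\hR>0$, and its cost block is positive semidefinite by \ref{MFTP:A1} together with $P_{\scT}\ges0$. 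The standard boundedness estimate for Riccati solutions of stabilizable systems (the same mechanism used for $P_{\scT}$) then gives the bound, so $|\D(t)|\les B$ uniformly.

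Next I would derive an exact equation for $\D$. Completing the square in \rf{ARE:Pi} with the feedback $\wh\Th$ rewrites the ARE as a Lyapunov equation $\vP\wh\cA+\wh\cA^\top\vP+\cM=0$, where $\cM$ is a symmetric matrix built from $P$ and $\wh\Th$. Performing the same completion in \rf{Ric:Pi} but keeping this \emph{same} $\wh\Th$ (no longer the minimizing feedback there) produces a remainder of definite sign; subtracting the two identities, and using $\hC^\top E\hC$, $\hD^\top E\hC$, $\hD^\top E\hD$ for the coefficient differences, I expect to reach (with $\Phi\deq\hC+\hD\wh\Th$ and $\wh\Th_{\scT}\deq-\wh\cR(P_{\scT})^{-1}\wh\cS(P_{\scT},\vP_{\scT})$)
\begin{equation*}
\dot\D=-\wh\cA^\top\D-\D\wh\cA-\Phi^\top E\,\Phi+(\wh\Th-\wh\Th_{\scT})^\top\wh\cR(P_{\scT})(\wh\Th-\wh\Th_{\scT}).
\end{equation*}
The essential features are that the linear part is the \emph{stable} Lyapunov operator $\D\mapsto\wh\cA^\top\D+\D\wh\cA$, that the inhomogeneity $\Phi^\top E\Phi$ is controlled by $|E(t)|\les Ke^{-\l(T-t)}$, and that the last term is nonnegative and, since $\wh\Th-\wh\Th_{\scT}$ depends smoothly on $(\D,E)$ and vanishes when both do, obeys $0\les(\cd)\les C(|\D|+|E|)^2$ on the bounded region; to leading order it is the genuine Riccati quadratic $\D\,\hB\wh\cR(P)^{-1}\hB^\top\D$.

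Finally I would pass to backward time $\tau=T-t$ and integrate via the variation-of-constants formula for the stable Lyapunov semigroup. The terminal value $\D(0)=-\vP$ and the exponentially small source contribute $O(e^{-\l\tau})$, so the main obstacle is the nonnegative quadratic term: a naive Gronwall argument using only $|\D|\les B$ does not close, because the constant $B$ may exceed the margin $\a$. I expect to circumvent this in one of two equivalent ways. In the first, a continuation/bootstrap argument shows the ansatz $|\D(\tau)|\les Ke^{-\l\tau}$ is self-improving once $\l$ is fixed below $\a$ and $K$ is large, the quadratic term feeding back only a contribution $\propto K^{2}e^{-2\l\tau}$. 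In the second, more structural route, one uses that $\D$ itself solves a Riccati equation with stable drift $\wh\cA$ and nonnegative quadratic coefficient, so the substitution $\D=YX^{-1}$ linearizes the homogeneous part into an upper-triangular Hamiltonian system whose exponential dichotomy—driven by the Hurwitz spectrum of $\wh\cA$—forces decay, with the exponentially small inhomogeneity entering only as a perturbation. Either route yields \rf{Pi:shoulian} with a rate $\l$ no larger than the spectral margin of $\wh\cA$ and the rate of \autoref{lmm:Si-P:bound}.
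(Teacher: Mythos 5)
Your derivation of the exact equation for $\D(t)\deq\vP_{\scT}(t)-\vP$ is correct, and is in fact cleaner than the paper's: completing the square in \rf{Ric:Pi} and \rf{ARE:Pi} with the \emph{same} frozen feedback $\wh\Th$ and subtracting does yield
\begin{equation*}
\dot\D=-\wh\cA^\top\D-\D\wh\cA-(\hC+\hD\wh\Th)^\top E\,(\hC+\hD\wh\Th)+(\wh\Th-\wh\Th_{\scT})^\top\wh\cR(P_{\scT})(\wh\Th-\wh\Th_{\scT}),
\end{equation*}
which is exactly the paper's identity \rf{dotD=*} (obtained there, in time-reversed variables, by a much longer direct expansion). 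The a priori bound $0\les\vP_{\scT}(t)\les\Lambda I$ is also fine. The gap is entirely in your closing step, and it is precisely the point to which the paper devotes its Step 1.

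The paper's proof has three steps: Step 1 proves, \emph{with no rate}, that $\Up(t)\deq\vP_{\scT}(T-t)$ (a single function of time-to-go, independent of $T$) is nondecreasing and bounded above --- by comparing the value functions of auxiliary deterministic LQ problems via \autoref{lmm:DLQ-matrix} and using a stabilizer of \rf{ODE-sys} --- hence converges, and identifies the limit as $\vP$ by uniqueness of the ARE; Step 2 is the difference equation; Step 3 is a nonlinear Gronwall argument which is explicitly \emph{initialized by Step 1}: it starts the bootstrap at a large time $s$ where $k(s)=\a^2|\D(s)|+\rho\a^3e^{-2\b s}\b^{-1}$ is already small, which requires knowing $|\D(s)|\to0$. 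Your proposal keeps Steps 2--3 but discards Step 1, replacing it by the uniform bound $|\D|\les B_0$, and no argument of the kind you sketch can close from that input alone. The reason is structural: in backward time $\tau=T-t$ the quadratic remainder enters with a \emph{negative} sign and $\D|_{\tau=0}=-\vP<0$, so decay to $0$ is a basin-of-attraction statement, not a perturbative one; in the scalar caricature $\d'=-2\a\d-c\d^2$, the origin attracts only $\{\d>-2\a/c\}$, and $\d\equiv-2\a/c$ is a bounded, non-decaying solution with zero forcing that satisfies every hypothesis you allow yourself (boundedness, exponentially decaying source, the difference ODE). Concretely, your route (a) cannot be launched: the ansatz $|\D(\tau)|\les Ke^{-\l\tau}$ must hold on an initial interval $[0,\tau_0]$ using only $|\D|\les B_0$, forcing $K\ges B_0e^{\l\tau_0}$, and feeding this through the quadratic term $\propto K^2e^{-2\l\tau}$ closes only under a smallness condition of the form $M^2CB_0<2(\a-\l)$, which is not available. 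Your route (b) hides the same issue: for the frozen-coefficient model the substitution $\D=YX^{-1}$ gives $\D(\tau)=-e^{\wh\cA^\top\tau}\vP\,[I-G(\tau)\vP]^{-1}e^{\wh\cA\tau}$ with $G(\tau)=\int_0^\tau e^{\wh\cA s}\hB\wh\cR(P)^{-1}\hB^\top e^{\wh\cA^\top s}ds$, and the uniform invertibility of $I-G(\tau)\vP$ that you need is equivalent to the global existence/convergence of the model Riccati flow started at $-\vP$ --- i.e., to the very statement being proven --- while the time-dependent $E$-perturbation of the dichotomy is only gestured at. The repair is to reinstate the paper's Step 1 (or any other rate-free argument giving $|\D(\tau)|\to0$ uniformly in $T$); once $|\D|$ is below the margin on some $[\tau_1,\i)$, your bootstrap does close there, and the initial segment $[0,\tau_1]$ is absorbed into the constant $K$.
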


Before going further, let us point out that in order to prove \autoref{thm:vP-Pi:bound},
we can assume without loss of generality that
\begin{align}\label{S=bS=0}
S=0, \q \bS=0.
\end{align}
Indeed, let
\begin{align}\label{Tihuan:1}
\cA = A-BR^{-1}S, \q \cC = C-DR^{-1}S, \q  \cQ = Q-S^\top R^{-1}S,
\end{align}
and let $\cP(\cd)$ be the positive semi-definite solution to the Riccati equation
\begin{equation}\label{Ric:cP}\left\{\begin{aligned}
&\dot\cP + \cP\cA + \cA^\top\cP + \cC^\top\cP\cC + \cQ  \\
&\hp{\cP} -(\cP B+ \cC^\top\cP D)(R+D^\top\cP D)^{-1}(B^\top\cP+D^\top\cP\cC)=0, \\
&\cP(T) = 0.
\end{aligned}\right.\end{equation}
We have
\begin{align}\label{eqn:cP-1}
& \cP\cA + \cA^\top\cP + \cC^\top\cP\cC + \cQ \nn\\
&\q= \cP A + A^\top\cP  + C^\top\cP C + Q -(\cP B+C^\top\cP D+S^\top)R^{-1}S  \nn\\
&\q\hp{=\ } - S^\top R^{-1}(B^\top\cP+D^\top\cP C+S) + S^\top R^{-1}(R+D^\top\cP D)R^{-1}S .
\end{align}
On the other hand,
\begin{align*}
& \cP B+ \cC^\top\cP D= (\cP B+C^\top\cP D+S^\top) -S^\top R^{-1}(R+D^\top\cP D),
\end{align*}
and hence
\begin{align}\label{eqn:cP-2}
& (\cP B+ \cC^\top\cP D)(R+D^\top\cP D)^{-1}(B^\top\cP+D^\top\cP\cC) \nn\\
&\q= (\cP B+C^\top\cP D+S^\top)(R+D^\top\cP D)^{-1}(B^\top\cP+D^\top\cP C+S) \nn\\
&\q\hp{=\ } -(\cP B+C^\top\cP D+S^\top)R^{-1}S - S^\top R^{-1}(B^\top\cP+D^\top\cP C+S) \nn\\
&\q\hp{=\ } +S^\top R^{-1}(R+D^\top\cP D)R^{-1}S.
\end{align}
Substituting \rf{eqn:cP-1} and \rf{eqn:cP-2} into \rf{Ric:cP},
we see that $\cP(\cd)$ satisfies the same equation as $P_{\scT}(\cd)$, which implies $\cP(\cd)=P_{\scT}(\cd)$.
By a similar argument, we can show that $\vP_{\scT}(\cd)$ satisfies
\begin{equation}\label{Ric:cPi}\left\{\begin{aligned}
&\dot\vP_{\scT} + \vP_{\scT}\wh\cA + \wh\cA\,^\top\vP_{\scT} + \wh\cC\,^\top P_{\scT}\wh\cC + \wh\cQ  \\
&\hp{\vP_{\scT}} -(\vP_{\scT}\hB+ \wh\cC\,^\top P_{\scT}\hD)(\hR+\hD^\top P_{\scT}\hD)^{-1}(\hB^\top\vP_{\scT}+\hD^\top P_{\scT}\wh\cC)=0, \\
&\vP_{\scT}(T) = 0,
\end{aligned}\right.\end{equation}
where
\begin{align}\label{Tihuan:2}
\wh{\cA} = \hA - \hB\hR^{-1}\hS, \q
\wh{\cC} = \hC - \hD\hR^{-1}\hS, \q
\wh{\cQ} = \hQ - \hS^\top\hR^{-1}\hS.
\end{align}
Thus, by making transformations \rf{Tihuan:1} and \rf{Tihuan:2}, we can assume without loss of generality
that \rf{S=bS=0} holds.

\ms

To prove \autoref{thm:vP-Pi:bound}, we need the following lemma.

\begin{lemma}\label{lmm:DLQ-matrix}
Let {\rm\ref{MFTP:A1}} hold. Then for any positive semi-definite matrix $\D\ges0$,
$$\begin{pmatrix*}\hC^\top\D\hC+\hQ & \hC^\top\D\hD \\
                  \hD^\top\D\hC     & \wh\cR(\D)\end{pmatrix*}\ges0, $$
or equivalently,
$$ \hC^\top\D\hC+\hQ- \(\hC^\top\D\hD\) \wh\cR(\D)^{-1} \(\hD^\top\D\hC\) \ges0. $$
\end{lemma}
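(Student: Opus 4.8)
The plan is to prove the block‑matrix inequality directly and then read off the equivalent Schur‑complement form, rather than the other way around. First I would record that, since \ref{MFTP:A1} gives $\hR>0$, the lower‑right block
$$\wh\cR(\D)=\hR+\hD^\top\D\hD$$
is positive definite for every $\D\ges0$ (it is the sum of a positive definite and a positive semi‑definite matrix). This is precisely what makes the two displayed statements equivalent: for a symmetric block matrix whose lower‑right corner is invertible, the standard Schur‑complement criterion says the matrix is positive semi‑definite if and only if its Schur complement with respect to that corner is. Hence it suffices to establish the first (block) inequality.

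The key step is the decomposition
$$\begin{pmatrix}\hC^\top\D\hC+\hQ & \hC^\top\D\hD \\ \hD^\top\D\hC & \hR+\hD^\top\D\hD\end{pmatrix}=\begin{pmatrix}\hC^\top \\ \hD^\top\end{pmatrix}\D\begin{pmatrix}\hC & \hD\end{pmatrix}+\begin{pmatrix}\hQ & 0 \\ 0 & \hR\end{pmatrix}.$$
The first summand has the form $L^\top\D L$ with $L=(\hC\;\hD)$, so it is positive semi‑definite because $\D\ges0$; no perturbation argument of the kind used in \autoref{lmm:matrix} is needed here. The second summand is block‑diagonal, hence positive semi‑definite (in fact positive definite) as soon as both $\hQ$ and $\hR$ are positive definite.

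The one point that needs a short argument, and the only place where \ref{MFTP:A1} is genuinely used, is $\hQ>0$: the hypothesis provides $\hQ-\hS^\top\hR^{-1}\hS>0$ but not $\hQ>0$ verbatim. Since $\hR>0$ we have $\hS^\top\hR^{-1}\hS\ges0$, so writing $\hQ=(\hQ-\hS^\top\hR^{-1}\hS)+\hS^\top\hR^{-1}\hS$ exhibits $\hQ$ as the sum of a positive definite and a positive semi‑definite matrix, whence $\hQ>0$. Combined with $\hR>0$, the block‑diagonal term is positive definite, so the whole matrix is a sum of a positive semi‑definite and a positive definite matrix and is therefore positive (a fortiori positive semi‑definite); the Schur‑complement equivalence then delivers the second displayed inequality. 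I do not anticipate a real obstacle: the only subtlety is to resist treating $\hQ-\hS^\top\hR^{-1}\hS>0$ as a statement about $\hQ$ and instead to note that $\hQ$ itself is positive definite.
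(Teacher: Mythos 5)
Your proof is correct, but it follows a genuinely different route from the paper's. The paper proves the Schur-complement form directly: it writes $\D=M^\top M$, applies \autoref{lmm:matrix} to obtain $I-(M\hD)\wh\cR(\D)^{-1}(\hD^\top M^\top)\ges0$, and then expresses the Schur complement as $\hQ+(M\hC)^\top\big[I-(M\hD)\wh\cR(\D)^{-1}(\hD^\top M^\top)\big](M\hC)$. You instead prove the block form first, via the decomposition
\begin{equation*}
\begin{pmatrix}\hC^\top\D\hC+\hQ & \hC^\top\D\hD\\ \hD^\top\D\hC & \wh\cR(\D)\end{pmatrix}
=\begin{pmatrix}\hC^\top\\ \hD^\top\end{pmatrix}\D\begin{pmatrix}\hC & \hD\end{pmatrix}
+\begin{pmatrix}\hQ & 0\\ 0 & \hR\end{pmatrix},
\end{equation*}
a sum of a positive semi-definite congruence term and a positive definite block-diagonal term, and then pass to the Schur-complement form. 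This buys two things: it bypasses \autoref{lmm:matrix} altogether (including its $\e$-perturbation argument), and it makes explicit a point the paper leaves tacit --- the paper's final step $\hQ+(M\hC)^\top[\cdots](M\hC)\ges0$ also needs $\hQ\ges0$, and your observation $\hQ=(\hQ-\hS^\top\hR^{-1}\hS)+\hS^\top\hR^{-1}\hS>0$ is precisely the justification required under \ref{MFTP:A1}. What the paper's route buys is mainly the isolation of the key estimate into the standalone matrix inequality \autoref{lmm:matrix}; since that lemma is used nowhere else in the paper, your argument is arguably the leaner and more self-contained of the two.

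One small caution: you state the Schur-complement criterion for a symmetric block matrix ``whose lower-right corner is invertible.'' Invertibility alone is not sufficient --- for $\mathrm{diag}(1,-1)$ the corner $-1$ is invertible and the Schur complement $1-0\cdot(-1)^{-1}\cdot0=1$ is nonnegative, yet the matrix is indefinite. The criterion requires the corner to be positive definite, which you had just verified via $\wh\cR(\D)\ges\hR>0$, so your application is sound; only the phrasing of the general fact should be tightened.
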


\begin{proof}
Since $\hR>0$ and $\D\ges0$, we have
\begin{align}\label{Up:positive-1}
\wh\cR(\D) = \hR+\hD^\top\D\hD \ges \hR >0.
\end{align}
Let $M\in\dbR^{n\times n}$ be such that $\D=M^\top M$. Then by \autoref{lmm:matrix},
$$ I- \(M\hD\) \wh\cR(\D)^{-1} \(\hD^\top M^\top\)
=  I - \(M\hD\)\[\hR+\(M\hD\)^\top\(M\hD\)\]^{-1}\(M\hD\)^\top \ges 0. $$
Consequently,
\begin{align*}
& \hC^\top\D\hC+\hQ- \big(\hC^\top\D\hD\big) \wh\cR(\D)^{-1} \big(\hD^\top\D\hC\big) \\
&\q= \hQ + \hC^\top\[\D- \big(\D\hD\big) \wh\cR(\D)^{-1} \big(\hD^\top\D\big)\]\hC  \\
&\q= \hQ + \big(M\hC\big)^\top\[I- \big(M\hD\big) \wh\cR(\D)^{-1} \big(\hD^\top M^\top\big)\]\big(M\hC\big)\ges 0.
\end{align*}
The proof is complete.
\end{proof}

In the following proof and in the sequel, we shall denote by $K$ and $\l$ two generic positive constants,
which do not depend on $T$ and may vary from line to line.

\begin{proof}[\textbf{Proof of \autoref{thm:vP-Pi:bound}.}]
As mentioned previously, we may assume $S=0$ and $\bS=0$ to simplify the proof.
Define for $t\in[0,\i)$,
\begin{align*}
\Si(t) \deq P_{\scT}(T-t), \q \Up(t) \deq \vP_{\scT}(T-t); \q\text{if}~t\les T.
\end{align*}
It is shown in \cite{Sun-Wang-Yong2022} that the definition of $\Si(t)$ does not depend on the choice of $T\ges t$, and
\begin{align}\label{Si:dandiao}
 0<\Si(s)\les \Si(t), \q |\Si(t)-P| \les Ke^{-\l t}, \q\forall 0\les s\les t<\i,
\end{align}
for some positive constants $K,\l>0$.
A similar argument shows that the definition of $\Up(t)$ does not depend on the choice of $T\ges t$ either,
and $\Up(\cd)$ satisfies the following ODE:
\begin{equation}\label{Ric:Up}\left\{\begin{aligned}
& \dot\Up(t) = \wh\cQ(\Si(t),\Up(t))
  -\wh\cS(\Si(t),\Up(t))^\top\wh\cR(\Si(t))^{-1}\wh\cS(\Si(t),\Up(t)), \\
& \Up(0) = 0.
\end{aligned}\right.\end{equation}

\ss

{\it Step 1.} We first show that $\lim_{t\to\i}\Up(t)=\vP$.

\ms

Consider the deterministic LQ optimal control problem with state equation
\begin{equation*}\left\{\begin{aligned}
& \dot X(t) = \hA X(t) +\hB u(t), \q t\in[0,T],\\
& X(0) = x,
\end{aligned}\right.\end{equation*}
and cost functional
\begin{align*}
\cJ_{\scT}(x;u(\cd)) &= \int_0^T\Blan
\begin{pmatrix*} \hC^\top P_{\scT}(t)\hC+\hQ & \hC^\top P_{\scT}(t)\hD \\
                 \hD^\top P_{\scT}(t)\hC     & \wh\cR(P_{\scT}(t))\end{pmatrix*}
\begin{pmatrix*} X(t) \\ u(t)\end{pmatrix*},
\begin{pmatrix*} X(t) \\ u(t)\end{pmatrix*}\Bran dt.
\end{align*}
Since $P_{\scT}(t)>0$,
\begin{align}\label{Up:positive-1}
\wh\cR(P_{\scT}(t)) = \hR+\hD^\top P_{\scT}(t)\hD \ges \hR >0,
\end{align}
and by \autoref{lmm:DLQ-matrix},
\begin{align}\label{Up:positive-2}
\begin{pmatrix*} \hC^\top P_{\scT}(t)\hC+\hQ & \hC^\top P_{\scT}(t)\hD \\
                 \hD^\top P_{\scT}(t)\hC     & \wh\cR(P_{\scT}(t))\end{pmatrix*}\ges0.
\end{align}
Because of \rf{Up:positive-1} and \rf{Up:positive-2}, the above deterministic LQ problem
is uniquely solvable with value function given by $\lan\vP_{\scT}(0)x,x\ran$.
Note that $\cJ_{\scT}(x;u(\cd))$ can be written as
\begin{align*}
\cJ_{\scT}(x;u(\cd)) &= \int_0^T\Blan
\begin{pmatrix*} \hC^\top \Si(T-t)\hC+\hQ & \hC^\top \Si(T-t)\hD \\
                 \hD^\top \Si(T-t)\hC     & \wh\cR(\Si(T-t))\end{pmatrix*}
\begin{pmatrix*} X(t) \\ u(t)\end{pmatrix*},
\begin{pmatrix*} X(t) \\ u(t)\end{pmatrix*}\Bran dt.
\end{align*}
For $T_2\ges T_1\ges0$, set
$$ \D(t) = \Si(T_2-t)-\Si(T_1-t), \q 0\les t\les T_1. $$
Since $\Si(\cd)$ is nondecreasing, $\D(t)\ges0$ and hence by \autoref{lmm:DLQ-matrix},
$$\begin{pmatrix*}
\hC^\top \D(t)\hC+\hQ & \hC^\top \D(t)\hD \\
\hD^\top \D(t)\hC     & \wh\cR(\D(t))
\end{pmatrix*} \ges 0. $$
Thus, for any square-integrable $u:[0,T_2]\to\dbR^m$,
\begin{align*}
&\cJ_{\scT_2}(x;u(\cd))-\cJ_{\scT_1}(x;u(\cd)) \\
&\q\ges \int_0^{\scT_1}\Blan
\begin{pmatrix*} \hC^\top \Si(T_2-t)\hC+\hQ & \hC^\top \Si(T_2-t)\hD \\
                 \hD^\top \Si(T_2-t)\hC     & \wh\cR(\Si(T_2-t))\end{pmatrix*}
\begin{pmatrix*} X(t) \\ u(t)\end{pmatrix*},
\begin{pmatrix*} X(t) \\ u(t)\end{pmatrix*}\Bran dt \\
&\q\hp{=\ } -\int_0^{\scT_1}\Blan
\begin{pmatrix*} \hC^\top \Si(T_1-t)\hC+\hQ & \hC^\top \Si(T_1-t)\hD \\
                 \hD^\top \Si(T_1-t)\hC     & \wh\cR(\Si(T_1-t))\end{pmatrix*}
\begin{pmatrix*} X(t) \\ u(t)\end{pmatrix*},
\begin{pmatrix*} X(t) \\ u(t)\end{pmatrix*}\Bran dt \\
&\q= \int_0^{\scT_1}\Blan
\begin{pmatrix*} \hC^\top \D(t)\hC+\hQ & \hC^\top \D(t)\hD \\
                 \hD^\top \D(t)\hC     & \wh\cR(\D(t))\end{pmatrix*}
\begin{pmatrix*} X(t) \\ u(t)\end{pmatrix*},
\begin{pmatrix*} X(t) \\ u(t)\end{pmatrix*}\Bran dt \\
&\q\ges0.
\end{align*}
This implies that for $0\les T_1\les T_2$,
$$ \lan\vP_{\scT_1}(0)x,x\ran \les \lan\vP_{\scT_2}(0)x,x\ran, \q\forall x\in\dbR^n. $$
Since $x$ is arbitrary, $\Up(t)=\vP_t(0)$ is nondecreasing in $t$.
On the other hand, take $\wh\Th$ to be a stabilizer of the system \rf{ODE-sys}.
By \rf{Si:dandiao}, $P_{\scT}(t)\les P$ and hence (recalling \autoref{lmm:DLQ-matrix})
\begin{align*}
\lan\vP_{\scT}(0)x,x\ran
&\les \int_0^T\Blan
\begin{pmatrix*} \hC^\top P_{\scT}(t)\hC+\hQ & \hC^\top P_{\scT}(t)\hD \\
                 \hD^\top P_{\scT}(t)\hC     & \wh\cR(P_{\scT}(t))\end{pmatrix*}
\begin{pmatrix*} X(t) \\ \wh\Th X(t)\end{pmatrix*},
\begin{pmatrix*} X(t) \\ \wh\Th X(t)\end{pmatrix*}\Bran dt\\
&\les \int_0^T\Blan
\begin{pmatrix*} \hC^\top P\hC+\hQ & \hC^\top P\hD \\
                 \hD^\top P\hC     & \wh\cR(P)\end{pmatrix*}
\begin{pmatrix*} X(t) \\ \wh\Th X(t)\end{pmatrix*},
\begin{pmatrix*} X(t) \\ \wh\Th X(t)\end{pmatrix*}\Bran dt\\
&\les \int_0^\i\Blan
\begin{pmatrix*} \hC^\top P\hC+\hQ & \hC^\top P\hD \\
                 \hD^\top P\hC     & \wh\cR(P)\end{pmatrix*}
\begin{pmatrix*} X(t) \\ \wh\Th X(t)\end{pmatrix*},
\begin{pmatrix*} X(t) \\ \wh\Th X(t)\end{pmatrix*}\Bran dt<\i.
\end{align*}
Thus, $\Up(t)=\vP_t(0)$ is bounded above, which, together with the monotonicity of $\Up(t)$, implies that
$$ \Up_\i \deq \lim_{t\to\i}\Up(t) $$
exists and is finite. From \rf{Ric:Up} we have
\begin{align*}
\Up(t+1)-\Up(t)
&= \int_t^{t+1} \wh\cQ(\Si(s),\Up(s))
  -\wh\cS(\Si(s),\Up(s))^\top\wh\cR(\Si(s))^{-1}\wh\cS(\Si(s),\Up(s)) ds.
\end{align*}
Letting $t\to\i$ yields
$$ \wh\cQ(P,\Up_\i)-\wh\cS(P,\Up_\i)^\top\wh\cR(P)^{-1}\wh\cS(P,\Up_\i)=0. $$
By uniqueness, $\Up_\i=\vP$.

\ms

{\it Step 2.} Set $\D(t)\deq \Up(t)-\vP$ and $\L(t)\deq \Si(t)-P$.
We show that $\D(\cd)$ satisfies the following ODE:
\begin{equation}\label{dotD=*}
\dot\D(t) = \D(t)\bar\cA + \bar\cA^\top\D(t) + f(\D(t),\L(t)) + g(\L(t)),
\end{equation}
where
\begin{align*}
\bar\cA &\deq \hA+\hB\wh\Th \q\text{with}\q \wh\Th \deq -\wh\cR(P)^{-1}\wh\cS(P,\vP), \\[2mm]
f(\D(t),\L(t)) &\deq -\wh\cS(\L(t),\D(t))^\top\wh\cR(\Si(t))^{-1}\wh\cS(\L(t),\D(t)) \\
&\hp{=\ } -\[(\hD\wh\Th)^\top\L(t)\hD\wh\cR(\Si(t))^{-1}\hB^\top\D(t)\]^\top \\
&\hp{=\ } -\[(\hD\wh\Th)^\top\L(t)\hD\wh\cR(\Si(t))^{-1}\hB^\top\D(t)\], \\[2mm]
g(\L(t)) &\deq \hC^\top\L(t)\hC - (\hD\wh\Th)^\top\L(t)\hD\wh\cR(\Si(t))^{-1}\wh\cS(P,\vP) \\
&\hp{=\ } -\[\wh\cS(P,\vP)^\top\wh\cR(\Si(t))^{-1}\hD^\top\L(t)\hC\]^\top \\
&\hp{=\ } -\[\wh\cS(P,\vP)^\top\wh\cR(\Si(t))^{-1}\hD^\top\L(t)\hC\].
\end{align*}

\ss

To verify \rf{dotD=*}, we first observe that
\begin{equation}\label{dotD=}\begin{aligned}
\dot\D(t)
&= \D(t)\hA + \hA^\top\D(t) + \hC^\top\L(t)\hC  \\
&\hp{=\ } -\[\hB^\top\Up(t)+\hD^\top\Si(t)\hC\]^\top\[\hR+\hD^\top\Si(t)\hD\]^{-1}\[\hB^\top\Up(t)+\hD^\top\Si(t)\hC\], \\
&\hp{=\ } +\[\hB^\top\vP+\hD^\top P\hC\]^\top\[\hR+\hD^\top P\hD\]^{-1}\[\hB^\top\vP+\hD^\top P\hC\].
\end{aligned}\end{equation}
Noting that
\begin{align*}
& \hB^\top\Up+\hD^\top\Si(t)\hC = \[\hB^\top\D(t)+\hD^\top\L(t)\hC\] + \[\hB^\top\vP+\hD^\top P\hC\],
\end{align*}
we have
\begin{align*}
& \[\hB^\top\Up(t)+\hD^\top\Si(t)\hC\]^\top\[\hR+\hD^\top\Si(t)\hD\]^{-1}\[\hB^\top\Up(t)+\hD^\top\Si(t)\hC\] \\
&\q= \[\hB^\top\D(t)+\hD^\top\L(t)\hC\]^\top\[\hR+\hD^\top\Si(t)\hD\]^{-1}\[\hB^\top\D(t)+\hD^\top\L(t)\hC\] \\
&\q\hp{=\ } + \[\hB^\top\vP+\hD^\top P\hC\]^\top\[\hR+\hD^\top\Si(t)\hD\]^{-1}\[\hB^\top\vP+\hD^\top P\hC\] \\
&\q\hp{=\ } + \[\hB^\top\D(t)+\hD^\top\L(t)\hC\]^\top\[\hR+\hD^\top\Si(t)\hD\]^{-1}\[\hB^\top\vP+\hD^\top P\hC\] \\
&\q\hp{=\ } + \[\hB^\top\vP+\hD^\top P\hC\]^\top\[\hR+\hD^\top\Si(t)\hD\]^{-1}\[\hB^\top\D(t)+\hD^\top\L(t)\hC\] \\
&\q\equiv \circled{1} + \circled{2} + \circled{3} + \circled{4}.
\end{align*}
For $\circled{2}$ and $\circled{4}$, we note that
\begin{align*}
& \[\hR+\hD^\top\Si(t)\hD\]^{-1} - \[\hR+\hD^\top P\hD\]^{-1} \\
&\q= \[\hR+\hD^\top P\hD\]^{-1}\[\(\hR+\hD^\top P\hD\)-\(\hR+\hD^\top\Si(t)\hD\)\]\[\hR+\hD^\top\Si(t)\hD\]^{-1}   \\
&\q= -\[\hR+\hD^\top P\hD\]^{-1}\hD^\top\L(t)\hD \[\hR+\hD^\top\Si(t)\hD\]^{-1}.
\end{align*}
Thus,
\begin{align*}
\circled{2}
&= \[\hB^\top\vP+\hD^\top P\hC\]^\top\[\hR+\hD^\top P\hD\]^{-1}\[\hB^\top\vP+\hD^\top P\hC\]  \\
&\hp{=\ } -\[\hB^\top\!\vP\!+\!\hD^\top\!P\hC\]^\top\[\hR\!+\!\hD^\top P\hD\]^{-1}\hD^\top\L(t)\hD
           \[\hR\!+\!\hD^\top\Si(t)\hD\]^{-1}\[\hB^\top\!\vP\!+\!\hD^\top\!P\hC\] \\
&= \[\hB^\top\vP+\hD^\top P\hC\]^\top\[\hR+\hD^\top P\hD\]^{-1}\[\hB^\top\vP+\hD^\top P\hC\]  \\
&\hp{=\ } +(\hD\wh\Th)^\top\L(t)\hD\wh\cR(\Si(t))^{-1}\wh\cS(P,\vP),
\end{align*}
and
\begin{align*}
\circled{4}
&= \[\hB^\top\vP+\hD^\top P\hC\]^\top\[\hR+\hD^\top\Si(t)\hD\]^{-1}\hD^\top\L(t)\hC \\
&\hp{=\ } +\[\hB^\top\vP+\hD^\top P\hC\]^\top\[\hR+\hD^\top\Si(t)\hD\]^{-1}\hB^\top\D(t) \\
&= \[\hB^\top\vP+\hD^\top P\hC\]^\top\[\hR+\hD^\top\Si(t)\hD\]^{-1}\hD^\top\L(t)\hC \\
&\hp{=\ } +\[\hB^\top\vP+\hD^\top P\hC\]^\top\[\hR+\hD^\top P\hD\]^{-1}\hB^\top\D(t) \\
&\hp{=\ } -\[\hB^\top\vP+\hD^\top P\hC\]^\top\[\hR+\hD^\top P\hD\]^{-1}\hD^\top\L(t)\hD \[\hR+\hD^\top\Si(t)\hD\]^{-1}\hB^\top\D(t) \\
&= \wh\cS(P,\vP)^\top\wh\cR(\Si(t))^{-1}\hD^\top\L(t)\hC - (\hB\wh\Th)^\top\D(t)
   +(\hD\wh\Th)^\top\L(t)\hD\wh\cR(\Si(t))^{-1}\hB^\top\D(t).
\end{align*}
Substituting these into \rf{dotD=} and noting that $\circled{3}=\circled{4}^\top$, we obtain
\begin{align*}
\dot\D(t)
&= \D(t)\hA + \hA^\top\D(t) + \hC^\top\L(t)\hC  \\
&\hp{=\ } -\[\hB^\top\D(t)+\hD^\top\L(t)\hC\]^\top\[\hR+\hD^\top\Si(t)\hD\]^{-1}\[\hB^\top\D(t)+\hD^\top\L(t)\hC\] \\
&\hp{=\ } -(\hD\wh\Th)^\top\L(t)\hD\wh\cR(\Si(t))^{-1}\wh\cS(P,\vP) \\
&\hp{=\ } -\[\wh\cS(P,\vP)^\top\wh\cR(\Si(t))^{-1}\hD^\top\!\L(t)\hC \!-\! (\hB\wh\Th)^\top\!\D(t)
          \!+\! (\hD\wh\Th)^\top\!\L(t)\hD\wh\cR(\Si(t))^{-1}\hB^\top\!\D(t)\]^\top  \\
&\hp{=\ } -\[\wh\cS(P,\vP)^\top\wh\cR(\Si(t))^{-1}\hD^\top\!\L(t)\hC \!-\! (\hB\wh\Th)^\top\!\D(t)
          \!+\! (\hD\wh\Th)^\top\!\L(t)\hD\wh\cR(\Si(t))^{-1}\hB^\top\!\D(t)\] \\
&= \D(t)\(\hA+\hB\wh\Th\) + \(\hA+\hB\wh\Th\)^\top\D(t) + \hC^\top\L(t)\hC  \\
&\hp{=\ } -\[\hB^\top\D(t)+\hD^\top\L(t)\hC\]^\top\[\hR+\hD^\top\Si(t)\hD\]^{-1}\[\hB^\top\D(t)+\hD^\top\L(t)\hC\] \\
&\hp{=\ } -\[(\hD\wh\Th)^\top\L(t)\hD\wh\cR(\Si(t))^{-1}\hB^\top\D(t)\]^\top
          -\[(\hD\wh\Th)^\top\L(t)\hD\wh\cR(\Si(t))^{-1}\hB^\top\D(t)\] \\
&\hp{=\ } -(\hD\wh\Th)^\top\L(t)\hD\wh\cR(\Si(t))^{-1}\wh\cS(P,\vP) \\
&\hp{=\ } -\[\wh\cS(P,\vP)^\top\wh\cR(\Si(t))^{-1}\hD^\top\L(t)\hC\]^\top
          -\[\wh\cS(P,\vP)^\top\wh\cR(\Si(t))^{-1}\hD^\top\L(t)\hC\],
\end{align*}
which is exactly \rf{dotD=*}.

\ms

{\it Step 3.} Next we show that there exist positive constants $K,\l>0$ such that
\begin{equation}\label{Up:shoulian}
|\Up(t)-\vP| \les Ke^{-\l t}, \q\forall 0\les t\les T<\i,
\end{equation}
which is equivalent to \rf{Pi:shoulian}.

\ms

By the variation of constants formula, for $t\ges s\ges0$,
\begin{align}\label{D-formula}
%\D(t) &= e^{\bar\cA^\top(t-s)}\D(s)e^{\bar\cA(t-s)} + \int_s^t e^{\bar\cA^\top(t-r)}\[f(\D(r),\L(r)) + g(\L(r))\]e^{\bar\cA(t-r)} dr \\
%
\D(t) &= e^{\bar\cA^\top(t-s)}\lt\{\D(s) \!+\! \int_s^t e^{\bar\cA^\top(s-r)}\[f(\D(r),\L(r)) \!+\! g(\L(r))\]e^{\bar\cA(s-r)} dr\rt\}e^{\bar\cA(t-s)}.
\end{align}
Clearly, there exists a constant $\rho>0$ such that (noting that $|\L(t)|$ is bounded so that $|\L(t)|^2\les K|\L(t)|$)
\begin{align}\label{fg:bound}
|f(\D(t),\L(t))| + |g(\L(t))| &\les \rho\[|\D(t)|^2+|\L(t)|\].
\end{align}
By \autoref{lmm:ARE-sol}, $\bar\cA$ is stable, so
$$ |e^{\bar\cA t}| \les \a_1e^{-\b_1 t}, \q\forall t\ges0 $$
for some constants $\a_1,\b_1>0$. Also, by \autoref{lmm:Si-P:bound},
$$ |\L(t)| = |\Si(t)-P| \les \a_2e^{-3\b_2 t}, \q\forall t\ges0 $$
for some positive constants $\a_2,\b_2>0$.
Take $\a=\a_1\vee \a_2$ and $\b=\b_1\wedge\b_2$ so that
\begin{align}\label{cA+Lambda:bound}
|e^{\bar\cA t}| \les \a e^{-\b t}, \q |\L(t)| \les \a e^{-3\b t}, \q\forall t\ges0.
\end{align}
From \rf{D-formula}, \rf{fg:bound}, and \rf{cA+Lambda:bound} we get
\begin{align*}
|\D(t)| &\les \a^2e^{-2\b(t-s)}\lt\{|\D(s)| + \rho\int_s^t e^{2\b(r-s)}\[|\D(r)|^2+|\L(r)|\] dr\rt\} \\
&\les \a^2e^{-2\b(t-s)}\lt\{|\D(s)| + \rho\int_s^t e^{2\b(r-s)}\[|\D(r)|^2+\a e^{-3\b r}\] dr\rt\} \\
&= \a^2e^{-2\b(t-s)}\lt\{|\D(s)| + \rho\a e^{-2\b s}\b^{-1}\[e^{-\b s}-e^{-\b t}\]+ \rho\int_s^t e^{2\b(r-s)}|\D(r)|^2 dr\rt\} \\
&\les \a^2e^{-2\b(t-s)}\lt\{|\D(s)| + \rho\a e^{-2\b s}\b^{-1}+ \rho\int_s^t e^{2\b(r-s)}|\D(r)|^2 dr\rt\}.
\end{align*}
For fixed $s\ges0$, set
$$ h(t) = e^{2\b(t-s)}|\D(t)|, \q k(s) = \a^2|\D(s)| + \rho\a^3e^{-2\b s}\b^{-1}. $$
Then
\begin{align*}
h(t) &\les k(s) + \rho\a^2\int_s^t e^{-2\b(r-s)}[h(r)]^2 dr, \q t\ges s.
\end{align*}
Set
$$ \th(t) = k(s) + \rho\a^2\int_s^t e^{-2\b(r-s)}[h(r)]^2 dr, \q p(t) = {1\over \th(t)}; \q t\ges s. $$
Then
\begin{align*}
p'(t) &= -{\th'(t)\over [\th(t)]^2} = -{\rho\a^2 e^{-2\b(t-s)}[h(t)]^2\over [\th(t)]^2} \ges -\rho\a^2 e^{-2\b(t-s)}.
\end{align*}
Since $\ds\lim_{s\to\i}k(s)=0$,  for $s$ large enough such that
$$ p(s) = {1 \over \th(s)} = {1 \over k(s)} \ges 1 + {\rho\a^2 \over 2\b}, $$
we have
\begin{align*}
p(t) &\ges p(s)-\rho\a^2\int_s^t e^{-2\b(r-s)}dr = p(s)-{\rho\a^2 \over 2\b}\[1-e^{2\b(s-t)}\] \ges 1
\end{align*}
and hence
$$ h(t) \les \th(t) = {1\over p(t)} \les 1, \q\forall t\ges s. $$
It follows that
$$ |\D(t)| = e^{-2\b(t-s)}h(t) \les K e^{-\l t}, \q\forall t\ges0, $$
for some constants $K,\l>0$.
\end{proof}

\section{The Turnpike Property}\label{Sec:proof}

In this section we prove the main results stated in \autoref{Sec:main-result}.
According to \cite[Theorem 2.3]{Sun2017}, under the assumption \ref{MFTP:A1}, Problem (MFLQ)$_{\scT}$
admits a unique optimal control for every initial state $x$.
Moreover, a pair $(X^*_{\scT}(\cd),u^*_{\scT}(\cd))$ is optimal for $x$ if and only if it satisfies
the following optimality system:
\begin{equation}\label{os:ProbT}\left\{\begin{aligned}
&dX^*_{\scT}(t) = \big\{AX^*_{\scT}(t)+\bA\dbE[X^*_{\scT}(t)]+Bu^*_{\scT}(t)+\bB\dbE[u^*_{\scT}(t)]+b\big\}dt \\
&\hp{dX^*_{\scT}(t)=} +\big\{CX^*_{\scT}(t)+\bC\dbE[X^*_{\scT}(t)]+Du^*_{\scT}(t)+\bD\dbE[u^*_{\scT}(t)]+\si\big\}dW(t),  \\
&dY^*_{\scT}(t) = -\,\big\{A^\top Y^*_{\scT}(t)+\bA^\top\dbE[Y^*_{\scT}(t)]+C^\top Z^*_{\scT}(t)+\bC^\top\dbE[Z^*_{\scT}(t)] + QX^*_{\scT}(t) \\
&\hp{dY^*_{\scT}(t)=} +\bQ\dbE[X^*_{\scT}(t)]+ S^\top u^*_{\scT}(t) + \bS^\top\dbE[u^*_{\scT}(t)]+q\big\} dt + Z^*_{\scT}(t)dW(t), \\
&X^*_{\scT}(0) = x, \q Y^*_{\scT}(T) = 0, \\
&B^\top Y^*_{\scT}(t)+\bB^\top\dbE[Y^*_{\scT}(t)]+D^\top Z^*_{\scT}(t)+\bD^\top\dbE[Z^*_{\scT}(t)] + SX^*_{\scT}(t) + \bS\dbE[X^*_{\scT}(t)] \\
&\hp{B^\top Y^*_{\scT}(t)} + Ru^*_{\scT}(t) + \bR\dbE[u^*_{\scT}(t)] + r = 0.
\end{aligned}\right.\end{equation}

\ms

Let $(X^*_{\scT}(\cd),u^*_{\scT}(\cd))$ be the optimal pair of Problem (MFLQ)$_{\scT}$
for the initial state $x$ and $(Y^*_{\scT}(\cd),Z^*_{\scT}(\cd))$ the adapted solution
to the corresponding adjoint equation in \rf{os:ProbT}.
Let $(x^*,u^*)$ be the unique solution of Problem (O) and $\l^*\in\dbR^n$ the vector in \rf{l+mu:exist}.
Define
\begin{align}\label{def:hX+hu}
\wX_{\scT}(\cd) = X^*_{\scT}(\cd)-x^*,  \q
\tu_{\scT}(\cd) = u^*_{\scT}(\cd)-u^*,  \q
\wY_{\scT}(\cd) = Y^*_{\scT}(\cd)-\l^*.
\end{align}
Subtracting \rf{l+mu:exist} from \rf{os:ProbT} yields
\begin{equation}\label{os:wX}\left\{\begin{aligned}
&d\wX_{\scT}(t) = \big\{A\wX_{\scT}(t)+\bA\dbE[\wX_{\scT}(t)]+B\tu_{\scT}(t)+\bB\dbE[\tu_{\scT}(t)]\big\}dt \\
&\hp{d\wX_{\scT}(t)=} +\big\{C\wX_{\scT}(t)+\bC\dbE[\wX_{\scT}(t)]+D\tu_{\scT}(t)+\bD\dbE[\tu_{\scT}(t)]+\si^*\big\}dW(t),  \\
&d\wY_{\scT}(t) = -\,\big\{A^\top\wY_{\scT}(t)+\bA^\top\dbE[\wY_{\scT}(t)]+C^\top Z^*_{\scT}(t)+\bC^\top\dbE[Z^*_{\scT}(t)]+Q\wX_{\scT}(t) \\
&\hp{d\wY_{\scT}(t)=} +\bQ\dbE[\wX_{\scT}(t)]+ S^\top\tu_{\scT}(t) + \bS^\top\dbE[\tu_{\scT}(t)]-\hC^\top P\si^*\big\} dt + Z^*_{\scT}(t)dW(t), \\
&\wX_{\scT}(0) = x-x^*, \q \wY_{\scT}(T) = -\l^*, \\
& B^\top\wY_{\scT}(t) + \bB^\top\dbE[\wY_{\scT}(t)] + D^\top Z^*_{\scT}(t) + \bD^\top\dbE[Z^*_{\scT}(t)]
                      + S\wX_{\scT}(t) + \bS\dbE[\wX_{\scT}(t)] \\
&\hp{B^\top\wY_{\scT}(t)} + R\tu_{\scT}(t) + \bR\dbE[\tu_{\scT}(t)] - \hD^\top P\si^* = 0,
\end{aligned}\right.\end{equation}
where $\si^* = \hC x^*+ \hD u^* + \si$.
From \rf{os:wX} we see that $(\wX_{\scT}(\cd),\tu_{\scT}(\cd))$ is an optimal pair of the mean-field stochastic LQ problem
with state equation
\begin{equation}\label{New-state}\left\{\begin{aligned}
dX(t) &= \big\{AX(t)+\bA\dbE[X(t)]+Bu(t)+\bB\dbE[u(t)]\big\}dt \\
      &\hp{=\ } +\big\{CX(t)+\bC\dbE[X(t)]+Du(t)+\bD\dbE[u(t)]+\si^*\big\}dW(t), \\
 X(0) &= x-x^*,
\end{aligned}\right.\end{equation}
and cost functional
\begin{align*}
J(x;u) &= \dbE\bigg\{\!-2\lan\l^*,X(T)\ran + \!\int_0^T\!\bigg[\Blan
\begin{pmatrix*}[l]Q & \!\!S^\top \\ S & \!\!R\end{pmatrix*}\!\!
\begin{pmatrix}X(t) \\ u(t)\end{pmatrix}\!,\!
\begin{pmatrix}X(t) \\ u(t)\end{pmatrix}\Bran
-2\Blan\begin{pmatrix}C^\top\! P\si^* \\ D^\top\! P\si^*\end{pmatrix}\!,\!
       \begin{pmatrix}X(t) \\ u(t)\end{pmatrix}\Bran \\
&\hp{=\ } +\Blan\begin{pmatrix*}[l]\bQ & \!\bS^\top \\ \bS & \!\bR\end{pmatrix*}\!
                \begin{pmatrix}\dbE[X(t)] \\ \dbE[u(t)]\end{pmatrix}\!,
                \begin{pmatrix}\dbE[X(t)] \\ \dbE[u(t)]\end{pmatrix}\Bran
-2\Blan\begin{pmatrix}\bC^\top P\si^* \\ \bD^\top P\si^*\end{pmatrix}\!,
       \begin{pmatrix}\dbE[X(t)] \\ \dbE[u(t)]\end{pmatrix}\Bran\bigg] dt\bigg\}.
\end{align*}
Thus, by \cite[Theorem 5.2]{Sun2017} we have the following result.

\begin{proposition}
Let {\rm\ref{MFTP:A1}--\ref{MFTP:A2}} hold. Let $P_{\scT}(\cd)$ be the solution to \rf{Ric:P}
and $\vP_{\scT}(\cd)$ the solution to \rf{Ric:Pi}. Define
\begin{align}
\label{def:ThT}
\Th_{\scT}(t)    &\deq  -\cR(P_{\scT}(t))^{-1}\cS(P_{\scT}(t)), \\
\label{def:wThT}
\wh\Th_{\scT}(t) &\deq -\wh\cR(P_{\scT}(t))^{-1}\wh\cS(P_{\scT}(t),\vP_{\scT}(t)),
\end{align}
and let $\f_{\scT}(\cd)$ and  $\wh\f_{\scT}(\cd)$ be the solutions to
\begin{equation}\label{BODE:f}\left\{\begin{aligned}
& \dot\f_{\scT}(t) + [A+B\Th_{\scT}(t)]^\top\f_{\scT}(t) + [C+D\Th_{\scT}(t)]^\top[P_{\scT}(t)-P]\si^*, \\
& \f_{\scT}(T) = -\l^*,
\end{aligned}\right.\end{equation}
and
\begin{equation}\label{BODE:bf}\left\{\begin{aligned}
& \dot{\wh\f}_{\scT}(t) + [\hA+\hB\wh\Th_{\scT}(t)]^\top\wh\f_{\scT}(t) + [\hC+\hD\wh\Th_{\scT}(t)]^\top[P_{\scT}(t)-P]\si^*, \\
& \wh\f_{\scT}(T) = -\l^*,
\end{aligned}\right.\end{equation}
respectively. Then the process $\tu_{\scT}(\cd)$ defined in \rf{def:hX+hu} is given by
\begin{align}\label{rep:hu}
 \tu_{\scT}(t) = \Th_{\scT}(t)\{\wX_{\scT}(t)-\dbE[\wX_{\scT}(t)]\}
                 +\wh\Th_{\scT}(t)\dbE[\wX_{\scT}(t)] + \th_{\scT}(t) + \wh\th_{\scT}(t),
\end{align}
where
\begin{align}
\th_{\scT}(t)    &= -\cR(P_{\scT}(t))^{-1}\big[B^\top\f_{\scT}(t) + D^\top(P_{\scT}(t)-P)\si^*\big],           \label{rep:f}\\
\wh\th_{\scT}(t) &= -\wh\cR(P_{\scT}(t))^{-1}\big[\hB^\top\wh\f_{\scT}(t) + \hD^\top(P_{\scT}(t)-P)\si^*\big]. \label{rep:wf}
\end{align}
\end{proposition}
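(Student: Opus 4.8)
The plan is to obtain \rf{rep:hu}--\rf{rep:wf} as a direct specialization of the closed-loop representation theorem for nonhomogeneous mean-field stochastic LQ problems, \cite[Theorem 5.2]{Sun2017}, applied to the auxiliary problem singled out just above. The essential observation, already recorded through \rf{os:wX}, is that the shifted pair $(\wX_{\scT}(\cd),\tu_{\scT}(\cd))$ is the optimal pair of the mean-field LQ problem with state equation \rf{New-state} and the displayed cost functional. First I would verify the hypotheses of \cite[Theorem 5.2]{Sun2017}: the quadratic part of the auxiliary cost functional coincides with that of Problem (MFLQ)$_{\scT}$, so under \ref{MFTP:A1} it is uniformly convex (this is exactly the convexity already invoked through \cite[Theorem 2.3]{Sun2017}), and the two Riccati equations governing the fluctuation and the mean are precisely \rf{Ric:P} and \rf{Ric:Pi}. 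By \autoref{lmm:Ric-sol} these admit the positive semidefinite solutions $(P_{\scT}(\cd),\vP_{\scT}(\cd))$, so the feedback gains \rf{def:ThT}--\rf{def:wThT} are well defined and, since $\cR(P_{\scT}(t))>0$ and $\wh\cR(P_{\scT}(t))>0$, the inverses appearing in \rf{rep:f}--\rf{rep:wf} make sense.

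With the hypotheses in force, \cite[Theorem 5.2]{Sun2017} yields the optimal control in the split feedback-plus-offset form \rf{rep:hu}, in which $\Th_{\scT}$ acts on the fluctuation $\wX_{\scT}-\dbE[\wX_{\scT}]$ and $\wh\Th_{\scT}$ on the mean $\dbE[\wX_{\scT}]$, while the offsets $\th_{\scT},\wh\th_{\scT}$ are produced from backward ODEs driven by the nonhomogeneous data of the auxiliary problem. It then remains to read off that data and substitute. From \rf{New-state} and the auxiliary cost functional, the nonhomogeneous inputs are a zero drift offset, the diffusion offset $\si^*$, the running linear weights $-C^\top P\si^*,-D^\top P\si^*$ together with their mean-field counterparts $-\bC^\top P\si^*,-\bD^\top P\si^*$, and the terminal linear term $-2\lan\l^*,X(T)\ran$. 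The last datum accounts for the terminal value $-\l^*$ in both \rf{BODE:f} and \rf{BODE:bf}, and the vanishing drift offset explains why no free additive term survives in those equations.

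The one step requiring genuine care---and the crux of the argument---is the combination that produces the forcing $[C+D\Th_{\scT}(t)]^\top[P_{\scT}(t)-P]\si^*$, rather than a bare $P_{\scT}(t)\si^*$, in \rf{BODE:f}, and likewise its hatted analogue in \rf{BODE:bf}. In the general formula the diffusion offset $\si^*$ enters the adjoint through the term $[C+D\Th_{\scT}(t)]^\top P_{\scT}(t)\si^*$, whereas the running linear weights $-C^\top P\si^*$ and $-D^\top P\si^*$, once folded into the feedback, contribute $-(C+D\Th_{\scT}(t))^\top P\si^*$; summing the two gives exactly $[C+D\Th_{\scT}(t)]^\top[P_{\scT}(t)-P]\si^*$, and the identical bookkeeping with $\hC,\hD,\wh\Th_{\scT}$ produces the forcing of \rf{BODE:bf} (the fluctuation Riccati $P_{\scT},P$, not $\vP_{\scT},\vP$, appearing there because $\si^*$ is a deterministic, hence mean-zero, diffusion offset felt by the cost only through $P$). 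Tracking the same terms through the offset part of the control law turns $D^\top P_{\scT}(t)\si^*-D^\top P\si^*$ and $\hD^\top P_{\scT}(t)\si^*-\hD^\top P\si^*$ into the differences $(P_{\scT}(t)-P)\si^*$ of \rf{rep:f}--\rf{rep:wf}. I expect the only real danger to be keeping the sign and feedback conventions of \cite{Sun2017} aligned with those used here; once that is pinned down, \rf{rep:hu}--\rf{rep:wf} follow verbatim.
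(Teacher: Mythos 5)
Your proposal is correct and follows essentially the same route as the paper: the paper derives the shifted optimality system \rf{os:wX}, identifies $(\wX_{\scT}(\cd),\tu_{\scT}(\cd))$ as the optimal pair of the auxiliary mean-field LQ problem with state equation \rf{New-state}, and then obtains the proposition directly by citing \cite[Theorem 5.2]{Sun2017}, exactly as you do. The only difference is that you spell out the bookkeeping (how the diffusion offset $\si^*$ and the linear weights $-C^\top P\si^*$, $-D^\top P\si^*$ combine to produce the forcing $[C+D\Th_{\scT}(t)]^\top[P_{\scT}(t)-P]\si^*$ and the differences $(P_{\scT}(t)-P)\si^*$ in \rf{rep:f}--\rf{rep:wf}), which the paper leaves implicit in the citation.
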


\begin{lemma}\label{lmm:th:bound}
Let {\rm\ref{MFTP:A1}--\ref{MFTP:A2}} hold. Then there exist positive constants $K,\l>0$, independent of $T$, such that
$$ |\wh\f_{\scT}(t)| + |\th_{\scT}(t)| + |\wh\th_{\scT}(t)| \les Ke^{-\l(T-t)}, \q\forall t\in[0,T]. $$
\end{lemma}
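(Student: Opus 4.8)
The plan is to deduce the whole estimate from exponential decay of the two inhomogeneous terms $\f_{\scT}(\cd)$ and $\wh\f_{\scT}(\cd)$, after which the bounds on $\th_{\scT}$ and $\wh\th_{\scT}$ are read off directly from their representations \rf{rep:f}--\rf{rep:wf}. Indeed, since $\cR(P_{\scT}(t))\ges R>0$ and $\wh\cR(P_{\scT}(t))\ges\hR>0$ uniformly in $t$ and $T$, the inverses $\cR(P_{\scT}(t))^{-1}$ and $\wh\cR(P_{\scT}(t))^{-1}$ are uniformly bounded; combined with \autoref{lmm:Si-P:bound}, the summands $D^\top(P_{\scT}(t)-P)\si^*$ and $\hD^\top(P_{\scT}(t)-P)\si^*$ are already $O(e^{-\l(T-t)})$. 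Hence it suffices to prove
\begin{equation*}
|\f_{\scT}(t)| + |\wh\f_{\scT}(t)| \les Ke^{-\l(T-t)}, \qquad \forall\, t\in[0,T].
\end{equation*}

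Next I would record that the feedback gains converge to their stationary values at an exponential rate. Since the maps $P\mapsto\cR(P)^{-1}\cS(P)$ and $(P,\vP)\mapsto\wh\cR(P)^{-1}\wh\cS(P,\vP)$ are locally Lipschitz on the region where $\cR,\wh\cR$ stay above $R,\hR$, \autoref{lmm:Si-P:bound} together with \autoref{thm:vP-Pi:bound} yields $|\Th_{\scT}(t)-\Th| + |\wh\Th_{\scT}(t)-\wh\Th|\les Ke^{-\l(T-t)}$, where $\Th=-\cR(P)^{-1}\cS(P)$ and $\wh\Th=-\wh\cR(P)^{-1}\wh\cS(P,\vP)$ are the stabilizers furnished by \autoref{lmm:ARE-sol}.

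The crucial structural point is that both coefficient matrices governing \rf{BODE:f} and \rf{BODE:bf} are Hurwitz. For \rf{BODE:bf} this is immediate, since $\wh\Th$ stabilizes the ODE system \rf{ODE-sys}, so $\hA+\hB\wh\Th$ has spectrum in the open left half-plane. For \rf{BODE:f} the limiting matrix is $A+B\Th$, and here one must argue: $\Th$ is only an $L^2$-stabilizer of the SDE \rf{SDE-sys}, a property about $\dbE|X|^2$ rather than about the deterministic matrix $A+B\Th$. However, taking expectations in the closed-loop equation gives $\dbE[X(t)]=e^{(A+B\Th)t}x$, and Jensen's inequality yields $\int_0^\i|e^{(A+B\Th)t}x|^2dt\les\dbE\int_0^\i|X(t)|^2dt<\i$ for every $x$; as this forces $\int_0^\i|e^{(A+B\Th)t}|^2dt<\i$, the matrix $A+B\Th$ is Hurwitz as well. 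I expect this to be the main obstacle, since it is the one place where the stochastic nature of the problem must be reconciled with a purely deterministic decay estimate; the rest is mechanical.

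With both limiting matrices stable, I would reverse time by setting $s=T-t$, turning \rf{BODE:f} and \rf{BODE:bf} into forward linear ODEs with datum $-\l^*$ at $s=0$. Each acquires the form $\dot\psi(s)=M^\top\psi(s)+B(s)\psi(s)+f(s)$ with $M\in\{A+B\Th,\ \hA+\hB\wh\Th\}$ Hurwitz, $|B(s)|\les Ke^{-\l s}$ (from $\Th_{\scT}-\Th$, resp.\ $\wh\Th_{\scT}-\wh\Th$), and $|f(s)|\les Ke^{-\l s}$ (from $P_{\scT}-P$ via \autoref{lmm:Si-P:bound}). Using $|e^{M^\top s}|\les\a e^{-\b s}$ and the variation-of-constants formula, a first Gronwall argument (bounding $e^{-\b(s-r)}\les1$ inside the integral) shows $|\psi(\cd)|$ is bounded uniformly in $T$; feeding this bound back in and evaluating $\int_0^s e^{-\b(s-r)}e^{-\l r}dr$ then gives $|\psi(s)|\les Ke^{-\mu s}$ for any $\mu<\min(\b,\l)$, the only care being the resonant case $\b=\l$, where the integral produces a factor $s$ harmlessly absorbed by lowering $\mu$ slightly. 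Undoing the time reversal delivers the decay of $\f_{\scT}$ and $\wh\f_{\scT}$, and hence the lemma.
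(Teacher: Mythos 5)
Your proof is correct, and its skeleton coincides with the paper's: reduce the bounds on $\th_{\scT}$ and $\wh\th_{\scT}$ to exponential decay of $\f_{\scT}$ and $\wh\f_{\scT}$, using the uniform bounds $\cR(P_{\scT}(t))\ges R>0$, $\wh\cR(P_{\scT}(t))\ges\hR>0$ together with \autoref{lmm:Si-P:bound}. The difference lies in how the decay of $\f_{\scT},\wh\f_{\scT}$ is obtained: the paper simply cites \cite{Sun-Wang-Yong2022} for $\f_{\scT}$ and disposes of $\wh\f_{\scT}$ with ``in a similar manner,'' whereas you prove both estimates from scratch by time reversal, variation of constants around the Hurwitz limit matrices $A+B\Th$ and $\hA+\hB\wh\Th$, and a two-pass Gronwall argument, with the gain convergence $|\Th_{\scT}(t)-\Th|+|\wh\Th_{\scT}(t)-\wh\Th|\les Ke^{-\l(T-t)}$ supplied by \autoref{lmm:Si-P:bound} and \autoref{thm:vP-Pi:bound}. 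This makes explicit something the paper's ``similar manner'' quietly requires: equation \rf{BODE:bf} involves $\vP_{\scT}$ through $\wh\Th_{\scT}$, so the decay of $\wh\f_{\scT}$ genuinely rests on \autoref{thm:vP-Pi:bound}, the new stability result of this paper. Your one substantive addition is the observation that \autoref{lmm:ARE-sol} only makes $\Th$ an $L^2$-stabilizer of the SDE \rf{SDE-sys}, which does not by itself say the deterministic matrix $A+B\Th$ is Hurwitz; your Jensen-plus-expectation argument closing this gap is valid. An alternative route to the same fact is Lyapunov's theorem applied to the completed-squares identity $P(A+B\Th)+(A+B\Th)^\top P=-(C+D\Th)^\top P(C+D\Th)-(Q+S^\top\Th+\Th^\top S+\Th^\top R\Th)<0$ with $P>0$, which is the identity the paper itself invokes later in the proof of \autoref{prop:EX*bound}. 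In short: same decomposition, but your version is self-contained where the paper's is by citation, at the cost of being longer; the paper's version buys brevity by outsourcing the core ODE estimate.
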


\begin{proof}
It is shown in \cite{Sun-Wang-Yong2022} that $\f_{\scT}(\cd)$, the solution of \rf{BODE:f}, satisfies
$$ |\f_{\scT}(t)| \les Ke^{-\l(T-t)}, \q\forall t\in[0,T] $$
for some positive constants $K,\l>0$ independent of $T$.
In a similar manner, we can show that the solution $\wh\f_{\scT}(\cd)$ of \rf{BODE:bf} satisfies
the same inequality with possibly different constants $K$ and $\l$.
The desired result then follows from  the fact
$$\cR(P_{\scT}(t))\ges R>0, \q \wh\cR(P_{\scT}(t))\ges \hR>0 $$
and \autoref{lmm:Si-P:bound}.
\end{proof}

If we substitute \rf{rep:hu} into \rf{New-state}, we see that the process $\wX_{\scT}(\cd)$ defined
in \rf{def:hX+hu} satisfies the closed-loop system
\begin{equation}\label{state:cloop}\left\{\begin{aligned}
d\wX_{\scT}(t)
&= \big\{(A\!+\!B\Th_{\scT})(\wX_{\scT}\!-\!\dbE[\wX_{\scT}])\!+\!(\hA\!+\!\hB\wh\Th_{\scT})\dbE[\wX_{\scT}]
           \!+\!\hB(\th_{\scT}\!+\!\wh\th_{\scT})\big\}dt \\
&\hp{=\ } +\big\{(C\!+\!D\Th_{\scT})(\wX_{\scT}\!-\!\dbE[\wX_{\scT}])\!+\!(\hC\!+\!\hD\wh\Th_{\scT})\dbE[\wX_{\scT}]
          \!+\!\hD(\th_{\scT}\!+\!\wh\th_{\scT})\!+\!\si^*\big\}dW, \\
\wX_{\scT}(0) &= x-x^*.
\end{aligned}\right.\end{equation}
We now provide an estimate for $\dbE[\wX_{\scT}(t)]$.

\begin{proposition}\label{prop:EwX:bound}
Let {\rm\ref{MFTP:A1}--\ref{MFTP:A2}} hold. Then there exist constants $K,\l>0$, independent of $T$, such that
\begin{align}\label{EX:bound}
|\dbE[\wX_{\scT}(t)]| \les  K\[e^{-\l t}+e^{-\l(T-t)}\], \q\forall t\in[0,T].
\end{align}
\end{proposition}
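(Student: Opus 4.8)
The plan is to reduce everything to a finite-dimensional linear ODE for the mean. Set $m_{\scT}(t)\deq\dbE[\wX_{\scT}(t)]$ and take expectation in the closed-loop equation \rf{state:cloop}. The diffusion term contributes nothing in the mean, the centered term $(A+B\Th_{\scT})(\wX_{\scT}-\dbE[\wX_{\scT}])$ has vanishing expectation because $\Th_{\scT}(\cd)$ is deterministic, and one is left with
$$\dot m_{\scT}(t)=\big(\hA+\hB\wh\Th_{\scT}(t)\big)m_{\scT}(t)+\hB\big(\th_{\scT}(t)+\wh\th_{\scT}(t)\big),\q m_{\scT}(0)=x-x^*.$$
Since the forcing term already decays like $e^{-\l(T-t)}$ by \autoref{lmm:th:bound}, the whole of \rf{EX:bound} becomes a statement about this single ODE.

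Next I would identify the drift $\hA+\hB\wh\Th_{\scT}(t)$ as an exponentially small perturbation of the stable matrix $\bar\cA\deq\hA+\hB\wh\Th$, where $\wh\Th=-\wh\cR(P)^{-1}\wh\cS(P,\vP)$ is the stabilizer of \rf{ODE-sys} provided by \autoref{lmm:ARE-sol}. Because $\wh\cR(P_{\scT}(t))\ges\hR>0$ uniformly, the inverse $\wh\cR(P_{\scT}(t))^{-1}$ is uniformly bounded and Lipschitz in $P_{\scT}(t)$, while $\wh\cS$ is affine in its arguments; hence combining \autoref{lmm:Si-P:bound} and \autoref{thm:vP-Pi:bound} yields $|\wh\Th_{\scT}(t)-\wh\Th|\les Ke^{-\l(T-t)}$. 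Writing $E(t)\deq\hB(\wh\Th_{\scT}(t)-\wh\Th)$, the decisive consequence is that $\int_0^T|E(t)|\,dt\les K/\l$ with a bound independent of $T$.

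The core of the argument is then a $T$-uniform decay estimate for the state-transition matrix $\Phi(t,s)$ of $\dot y=(\bar\cA+E(t))y$. From the Duhamel identity $\Phi(t,s)=e^{\bar\cA(t-s)}+\int_s^t e^{\bar\cA(t-r)}E(r)\Phi(r,s)\,dr$ and $|e^{\bar\cA\t}|\les\a e^{-\b\t}$ (valid since $\bar\cA$ is stable), the function $\psi(t)=e^{\b(t-s)}|\Phi(t,s)|$ satisfies $\psi(t)\les\a+\a\int_s^t|E(r)|\psi(r)\,dr$. Gronwall's inequality, together with $\int_0^T|E|\,dt\les K/\l$, gives $|\Phi(t,s)|\les Ke^{-\l(t-s)}$ for all $0\les s\les t\les T$, with $K,\l$ independent of $T$. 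I expect this Gronwall step to be the main obstacle: without the $T$-uniform integrability of $E$ the constant would degrade as $T\to\i$, and it is precisely the exponential convergence of the Riccati solutions (\autoref{lmm:Si-P:bound}, \autoref{thm:vP-Pi:bound}) that rescues it.

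Finally, the variation-of-constants representation
$$m_{\scT}(t)=\Phi(t,0)(x-x^*)+\int_0^t\Phi(t,s)\hB\big(\th_{\scT}(s)+\wh\th_{\scT}(s)\big)\,ds$$
assembles the estimate. The first term is bounded by $Ke^{-\l t}$ because $x-x^*$ is fixed (independent of $T$). For the second, inserting $|\Phi(t,s)|\les Ke^{-\l(t-s)}$ and $|\th_{\scT}(s)+\wh\th_{\scT}(s)|\les Ke^{-\l(T-s)}$ from \autoref{lmm:th:bound} and evaluating $\int_0^t e^{-\l(t-s)}e^{-\l(T-s)}\,ds\les\frac{1}{2\l}e^{-\l(T-t)}$ produces the $e^{-\l(T-t)}$ contribution. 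Adding the two bounds gives exactly \rf{EX:bound}.
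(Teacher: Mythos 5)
Your proposal is correct and follows essentially the same route as the paper: reduce to the ODE for $\dbE[\wX_{\scT}(t)]$, bound $|\wh\Th_{\scT}(t)-\wh\Th|\les Ke^{-\l(T-t)}$ via \autoref{lmm:Si-P:bound} and \autoref{thm:vP-Pi:bound}, invoke the stability of $\hA+\hB\wh\Th$ and \autoref{lmm:th:bound}, and close with a Duhamel--Gronwall argument whose constants stay bounded precisely because the perturbation is $T$-uniformly integrable. The only (harmless) difference is organizational: you apply Gronwall to the transition matrix $\Phi(t,s)$ of the perturbed system and then convolve, whereas the paper writes Duhamel around $e^{\wh\cA t}$ and applies Gronwall directly to $|\dbE[\wX_{\scT}(t)]|$; the two are equivalent.
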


\begin{proof}
The function $t\mapsto\dbE[\wX_{\scT}(t)]$ satisfies the following ODE:
\begin{equation}\label{state:cloop}\left\{\begin{aligned}
{d\over dt}\dbE[\wX_{\scT}(t)] &= [\hA+\hB\wh\Th_{\scT}(t)]\dbE[\wX_{\scT}(t)]+\hB[\th_{\scT}(t)+\wh\th_{\scT}(t)], \\
 \dbE[\wX_{\scT}(0)] &= x-x^* \equiv \tx.
\end{aligned}\right.\end{equation}
Using the notation
$$ \wh\cA_{\scT}(t) = \hA+\hB\wh\Th_{\scT}(t), \q \wh\Th= -\wh\cR(P)^{-1}\wh\cS(P,\vP), \q \wh\cA = \hA+\hB\wh\Th, $$
we have by the variation of constants formula that
\begin{align*}
\dbE[\wX_{\scT}(t)]
&= e^{\wh\cA t}\tx + \int_0^te^{\wh\cA(t-s)}\[\(\wh\cA_{\scT}(s)-\wh\cA\)\dbE[\wX_{\scT}(s)]
   + \hB\(\th_{\scT}(s)+\wh\th_{\scT}(s)\)\] ds.
\end{align*}
By \autoref{lmm:ARE-sol}, $\wh\cA$ is stable, so
$$ |e^{\wh\cA t}| \les Ke^{-\l t}, \q\forall t\ges0 $$
for some constants $K,\l>0$ independent of $T$. Since
\begin{align*}
\wh\Th - \wh\Th_{\scT}(t)
&= \(\hR+\hD^\top P_{\scT}(t)\hD\)^{-1}\(\hB^\top\vP_{\scT}(t)+\hD^\top P_{\scT}(t)\hC+\hS\)  \\
&\hp{=\ } - \(\hR+\hD^\top P\hD\)^{-1}\(\hB^\top\vP+\hD^\top P\hC+\hS\) \\
&= \(\hR+\hD^\top P_{\scT}(t)\hD\)^{-1}\(\hB^\top[\vP_{\scT}(t)-\vP]+\hD^\top[P_{\scT}(t)-P]\hC\)   \\
&\hp{=\ } + \[\(\hR+\hD^\top P_{\scT}(t)\hD\)^{-1}-\(\hR+\hD^\top P\hD\)^{-1}\]\(\hB^\top\vP+\hD^\top P\hC+\hS\)  \\
&= \(\hR+\hD^\top P_{\scT}(t)\hD\)^{-1}\(\hB^\top[\vP_{\scT}(t)-\vP]+\hD^\top[P_{\scT}(t)-P]\hC\)   \\
&\hp{=\ } +\(\hR+\hD^\top P_{\scT}(t)\hD\)^{-1}\hD^\top[P_{\scT}(t)-P]\hD\wh\Th,
\end{align*}
we have by \autoref{lmm:Si-P:bound} and \autoref{thm:vP-Pi:bound},
\begin{align}\label{hAT-hA}
|\wh\cA_{\scT}(t)-\wh\cA\,|
&\les K|\wh\Th_{\scT}(t)-\wh\Th| \les K\[|\vP_{\scT}(t)-\vP|+|P_{\scT}(t)-P|\] \nn\\
&\les Ke^{-\l(T-t)}, \q\forall t\in[0,T].
\end{align}
Also recalling \autoref{lmm:th:bound}, we obtain
\begin{align*}
|\dbE[\wX_{\scT}(t)]|
&\les Ke^{-\l t}  + K\int_0^t e^{-\l(t-s)}e^{-\l(T-s)}\[\dbE[\wX_{\scT}(s)]+1\] ds \\
&\les K\[e^{-\l t}+e^{-\l(T-t)}\] + K\int_0^t e^{-\l(T+t-2s)}\dbE[\wX_{\scT}(s)] ds \\
&\les K\[e^{-\l t}+e^{-\l(T-t)}\] + K\int_0^t e^{-\l(T-s)}\dbE[\wX_{\scT}(s)] ds.
\end{align*}
The desired result then follows from Gronwall's inequality.
\end{proof}

Recall the SDE \rf{SDE:X*}. We have the following result.

\begin{proposition}\label{prop:EX*bound}
Let {\rm\ref{MFTP:A1}--\ref{MFTP:A2}} hold. Then there exists a constant $K>0$ such that
the solution $X^*(\cd)$ to the SDE \rf{SDE:X*} satisfies
\begin{align}\label{X*:bound}
\dbE|X^*(t)|^2 \les K, \q\forall t\ges 0.
\end{align}
\end{proposition}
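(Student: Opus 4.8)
**Proposal for proving Proposition 4.6 (boundedness of $\dbE|X^*(t)|^2$).**

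The plan is to exploit the fact that the drift coefficient $A+B\Th$ and the diffusion coefficient $C+D\Th$ in \rf{SDE:X*} arise from the $L^2$-stabilizer $\Th=-\cR(P)^{-1}\cS(P)$ furnished by \autoref{lmm:ARE-sol}. The key structural input is that $\Th$ stabilizes system \rf{SDE-sys} in the $L^2$ sense, which by the standard Lyapunov-type characterization of $L^2$-stability means that the positive definite matrix $P$ solving \rf{ARE:P} serves as a Lyapunov function for the closed-loop homogeneous dynamics. Concretely, the ARE \rf{ARE:P} can be rewritten, after completing the square with $\Th$, in the closed-loop form
\begin{equation*}
P(A+B\Th) + (A+B\Th)^\top P + (C+D\Th)^\top P(C+D\Th) + \[Q + 2S^\top\Th + \Th^\top R\Th\] = 0,
\end{equation*}
so that the ``generator'' applied to the quadratic $\lan Px,x\ran$ is strictly negative definite away from the origin. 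This is the engine that will absorb the constant forcing $\si^*$ entering the diffusion term.

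The main step is then an application of It\^o's formula to $t\mapsto\lan PX^*(t),X^*(t)\ran$. Writing $A_\Th=A+B\Th$ and $C_\Th=C+D\Th$ for brevity, one computes
\begin{align*}
{d\over dt}\dbE\lan PX^*(t),X^*(t)\ran
&= \dbE\Big[\lan(PA_\Th+A_\Th^\top P+C_\Th^\top PC_\Th)X^*,X^*\ran\Big] \\
&\hp{=\ } + 2\dbE\lan PC_\Th X^*,\si^*\ran + \lan P\si^*,\si^*\ran.
\end{align*}
By the closed-loop ARE identity above, the first bracket equals $-\dbE\lan(Q+2S^\top\Th+\Th^\top R\Th)X^*,X^*\ran$, which under \ref{MFTP:A1} is bounded above by $-\mu\,\dbE|X^*|^2$ for some $\mu>0$ (the positivity coming from $Q-S^\top R^{-1}S>0$ together with $R>0$). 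The cross term $2\dbE\lan PC_\Th X^*,\si^*\ran$ is handled by Young's inequality, $2\lan PC_\Th X^*,\si^*\ran\les {\mu\over2}|X^*|^2 + C_0|\si^*|^2$ for a suitable constant $C_0$, so that half of the negative drift is consumed and a clean constant remains. Setting $V(t)=\dbE\lan PX^*(t),X^*(t)\ran$, this yields a differential inequality of the form $V'(t)\les -{\mu\over2}\dbE|X^*(t)|^2 + C_1 \les -cV(t)+C_1$ (using $P>0$ to convert between $\dbE|X^*|^2$ and $V$), whence $V$ stays bounded by the maximum of its initial value $V(0)=0$ and $C_1/c$ for all $t\ges0$. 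Since $P>0$, boundedness of $V(t)$ gives $\dbE|X^*(t)|^2\les K$, as desired.

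The step I expect to require the most care is the passage from ``$\Th$ is an $L^2$-stabilizer'' (the definition given in \ref{MFTP:A2}) to the strict Lyapunov dissipation inequality ${d\over dt}\dbE\lan PX,X\ran\les -\mu\,\dbE|X|^2$ for the \emph{homogeneous} closed-loop flow. Rather than invoking an external theorem, I would derive it directly from the closed-loop ARE identity and \ref{MFTP:A1}, since that keeps everything self-contained and avoids having to separately establish that $P$ is the stabilizing solution. One subtlety is that the weighting combination $Q+2S^\top\Th+\Th^\top R\Th$ must be shown strictly positive definite; this follows because completing the square in $u$ gives $Q+2S^\top\Th+\Th^\top R\Th = (Q-S^\top R^{-1}S) + (\Th+R^{-1}S)^\top R(\Th+R^{-1}S)\ges Q-S^\top R^{-1}S>0$. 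Once this positivity is in hand, the Gronwall/differential-inequality argument is routine, and the constant forcing by $\si^*$ poses no difficulty because it is absorbed through Young's inequality into the strictly negative drift.
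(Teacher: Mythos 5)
Your proposal is correct and follows essentially the same route as the paper: It\^o's formula applied to $\lan PX^*(t),X^*(t)\ran$, the closed-loop ARE identity $P(A+B\Th)+(A+B\Th)^\top P+(C+D\Th)^\top P(C+D\Th) = -\big(Q+S^\top\Th+\Th^\top S+\Th^\top R\Th\big)$, strict positivity of that weighting matrix via the same completion of squares $(Q-S^\top R^{-1}S)+(\Th+R^{-1}S)^\top R(\Th+R^{-1}S)>0$, Young's inequality on the cross term with $\si^*$, and a Gronwall-type differential inequality. The only cosmetic difference is that you write the weighting as $Q+2S^\top\Th+\Th^\top R\Th$ (harmless inside a quadratic form, though as a matrix identity it should be symmetrized) and you phrase the last step as $V'\les -cV+C_1$ rather than the paper's integral-form Gronwall estimate; both yield the same conclusion.
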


\begin{proof}
Let $P$ be the solution to the ARE \rf{ARE:P} and let $\Th= -\cR(P)^{-1}\cS(P)$. By It\^{o}'s rule,
\begin{align*}
&\dbE\lan PX^*(t),X^*(t)\ran \\
&\q= \dbE\int_0^t\Big\{2\lan PX^*(s),(A+B\Th)X^*(s)\ran \\
&\q\hp{=\ } +\lan P[(C+D\Th)X^*(s)+\si^*],(C+D\Th)X^*(s)+\si^*\ran\Big\}ds  \\
&\q= \dbE\int_0^t\Big\{\lan [P(A+B\Th)+(A+B\Th)^\top P + (C+D\Th)^\top P(C+D\Th)]X^*(s),X^*(s)\ran \\
&\q\hp{=\ } +2\lan P(C+D\Th)X^*(s),\si^*\ran + \lan P\si^*,\si^*\ran\Big\}ds  \\
&\q= \dbE\int_0^t\[-\lan(Q+\Th^\top R\Th + S^\top\Th +\Th^\top S)X^*(s),X^*(s)\ran \\
&\q\hp{=\ } + 2\lan P(C+D\Th)X^*(s),\si^*\ran + \lan P\si^*,\si^*\ran\]ds.
\end{align*}
Note that by \ref{MFTP:A1},
$$ Q+\Th^\top R\Th + S^\top\Th +\Th^\top S = Q-S^\top R^{-1}S + (\Th+R^{-1}S)^\top R(\Th+R^{-1}S) >0. $$
Let $\mu>0$ and $\nu>0$ be the smallest eigenvalues of $P$ and $Q+\Th^\top R\Th + S^\top\Th +\Th^\top S$, respectively. Then
\begin{align*}
\dbE|X^*(t)|^2
&\les {1\over\mu}\dbE\lan PX^*(t),X^*(t)\ran \\
&\les {1\over\mu}\dbE\int_0^t\lt[-\nu|X^*(s)|^2 + 2|X^*(s)|\cd|(C+D\Th)^\top P\si^*| + \lan P\si^*,\si^*\ran\rt]ds \\
&\les {1\over\mu}\dbE\int_0^t\lt[-{\nu\over2}|X^*(s)|^2 + {2\over\nu}|(C+D\Th)^\top P\si^*|^2 + \lan P\si^*,\si^*\ran\rt]ds.
\end{align*}
It follows from the Gronwall inequality that
\begin{align*}
\dbE|X^*(t)|^2
&\les {1\over\mu}\lt[{2\over\nu}|(C+D\Th)^\top P\si^*|^2 + \lan P\si^*,\si^*\ran\rt]\int_0^t\exp\lt[{\nu\over2\mu}(s-t)\rt]ds \\
&\les {2\over\nu}\lt[{2\over\nu}|(C+D\Th)^\top P\si^*|^2 + \lan P\si^*,\si^*\ran\rt].
\end{align*}
This completes the proof.
\end{proof}

We now prove \autoref{thm:main}.

\begin{proof}[\textbf{Proof of \autoref{thm:main}}]
Let $\Th_{\scT}(t) $ and $\wh\Th_{\scT}(t)$ be as in \rf{def:ThT} and \rf{def:wThT}, respectively, and let
$$\Th = -\cR(P)^{-1}\cS(P) , \q \wh\Th = -\wh\cR(P)^{-1}\wh\cS(P,\vP). $$
For notational simplicity, we write
\begin{align*}
&\cA= A+B\Th, \q \cC= C+D\Th, \q \cA_{\scT}(t)= A+B\Th_{\scT}(t), \q \cC_{\scT}(t)= C+D\Th_{\scT}(t), \\
&\wh\cA= \hA+\hB\wh\Th, \q \wh\cC= \hC+\hD\wh\Th, \q \wh\cA_{\scT}(t)= \hA+\hB\wh\Th_{\scT}(t), \q \wh\cC_{\scT}(t)= \hC+\hD\wh\Th_{\scT}(t).
\end{align*}
Then the process
$$ V_{\scT}(t)\deq \wX_{\scT}(t)-\dbE[\wX_{\scT}(t)]-X^*(t), \q t\in[0,T] $$
satisfies $V_{\scT}(0)=0$ and
\begin{align*}
dV_{\scT}(t)
&= \big\{\cA_{\scT}(t)V_{\scT}(t) + [\cA_{\scT}(t)-\cA]X^*(t) \big\}dt \\
&\hp{=\ } + \big\{\cC_{\scT}(t)V_{\scT}(t) + [\cC_{\scT}(t)-\cC]X^*(t) + \wh\cC_{\scT}(t)\dbE[\wX_{\scT}(t)]
          + \hD[\th_{\scT}(t)+\wh\th_{\scT}(t)]\big\}dW(t).
\end{align*}
Let $P>0$ be the solution to the ARE \rf{ARE:P}. Then It\^{o}'s rule implies that
\begin{align}\label{EV=}
&\dbE\lan PV_{\scT}(t),V_{\scT}(t)\ran
= \dbE\int_0^t\Big\{2\lan PV_{\scT},\cA_{\scT}V_{\scT} + (\cA_{\scT}-\cA)X^*\ran \nn\\
&\qq\qq +\lan P[\cC_{\scT}V_{\scT} +(\cC_{\scT}-\cC)X^* +h_{\scT}],\cC_{\scT}V_{\scT} +(\cC_{\scT}-\cC)X^* +h_{\scT}\ran\Big\}ds,
\end{align}
where we have suppressed $s$ in the integrand, and
\begin{align*}
h_{\scT}(s) &= \wh\cC_{\scT}(s)\dbE[\wX_{\scT}(s)] + \hD[\th_{\scT}(s)+\wh\th_{\scT}(s)].
\end{align*}
Note that
$$ \dbE[X^*(t)] = 0, \q \dbE[V_{\scT}(t)] = 0, \q\forall t\in[0,T]. $$
Thus,
\begin{align*}
&\dbE\lan P[\cC_{\scT}V_{\scT}+(\cC_{\scT}-\cC)X^*+h_{\scT}],\cC_{\scT}V_{\scT}+(\cC_{\scT}-\cC)X^*+h_{\scT}\ran \\
&\q= \dbE\lan P[\cC_{\scT}V_{\scT}+(\cC_{\scT}-\cC)X^*],\cC_{\scT}V_{\scT}+(\cC_{\scT}-\cC)X^*\ran + \lan Ph_{\scT},h_{\scT}\ran.
\end{align*}
Substituting the above into \rf{EV=} and expanding the integrand yields
\begin{align}\label{EV=2}
\dbE\lan PV_{\scT}(t),V_{\scT}(t)\ran
&=\int_0^t\dbE\Big\{\blan (P\cA_{\scT}+ \cA_{\scT}^\top P + \cC_{\scT}^\top P\cC_{\scT})V_{\scT},V_{\scT}\bran
 +\blan\wh\cC_{\scT}^\top P\wh\cC_{\scT}\dbE[\wX_{\scT}],\dbE[\wX_{\scT}]\bran \nn\\
&\hp{=\ } +2\blan[P(\cA_{\scT}-\cA)+\cC_{\scT}^\top P(\cC_{\scT}-\cC)]X^*,V_{\scT}\bran + k_{\scT}\Big\}ds,
\end{align}
where
\begin{align*}
k_{\scT}(s) &= \blan P[\cC_{\scT}(s)-\cC]X^*(s),[\cC_{\scT}(s)-\cC]X^*(s)\bran \\
&\hp{=\ } +\blan 2P\wh\cC_{\scT}(s)\dbE[\wX_{\scT}(s)] + P\hD[\th_{\scT}(s)+\wh\th_{\scT}(s)],\hD[\th_{\scT}(s)+\wh\th_{\scT}(s)]\bran.
\end{align*}
Next, Let $\vP>0$ be the solution to the ARE \rf{ARE:Pi}. Then integration by parts gives
\begin{align}\label{vPEwX=}
&\blan\vP\dbE[\wX_{\scT}(t)],\dbE[\wX_{\scT}(t)]\bran - \lan\vP\tx,\tx\ran  \nn\\
&\q= \int_0^t \Big\{\blan(\vP\wh\cA_{\scT}+\wh\cA_{\scT}^\top\vP)\dbE[\wX_{\scT}],\dbE[\wX_{\scT}]\bran
     +2\blan\vP\dbE[\wX_{\scT}],\hB[\th_{\scT}+\wh\th_{\scT}]\bran\Big\}ds.
\end{align}
Adding \rf{EV=2}  and \rf{vPEwX=}, we obtain
\begin{align}\label{EV+vPEwX=}
&\dbE\lan PV_{\scT}(t),V_{\scT}(t)\ran + \blan\vP\dbE[\wX_{\scT}(t)],\dbE[\wX_{\scT}(t)]\bran - \lan\vP\tx,\tx\ran \nn\\
&\q=\int_0^t\dbE\Big\{\blan (P\cA_{\scT}+ \cA_{\scT}^\top P + \cC_{\scT}^\top P\cC_{\scT})V_{\scT},V_{\scT}\bran \nn\\
&\q\hp{=\ } +\blan(\vP\wh\cA_{\scT}+\wh\cA_{\scT}^\top\vP+\wh\cC_{\scT}^\top P\wh\cC_{\scT})\dbE[\wX_{\scT}],\dbE[\wX_{\scT}]\bran \nn\\
&\q\hp{=\ } +2\blan[P(\cA_{\scT}-\cA)+\cC_{\scT}^\top P(\cC_{\scT}-\cC)]X^*,V_{\scT}\bran + \phi_{\scT}\Big\}ds,
\end{align}
where
$$ \phi_{\scT}(s)=k_{\scT}(s)+2\blan\vP\dbE[\wX_{\scT}(s)],\hB[\th_{\scT}(s)+\wh\th_{\scT}(s)]\bran. $$
We observe the following facts:
\begin{align*}
& P\cA_{\scT}(s) + \cA_{\scT}(s)^\top P + \cC_{\scT}(s)^\top P\cC_{\scT}(s) \\
&\q= P\cA + \cA^\top P + \cC^\top P\cC + P[\cA_{\scT}(s)-\cA] + [\cA_{\scT}(s)-\cA]^\top P \\
&\q\hp{=\ } + [\cC_{\scT}(s)-\cC]^\top P\cC + \cC_{\scT}(s)^\top P[\cC_{\scT}(s)-\cC], \\[2mm]
& P\cA + \cA^\top P + \cC^\top P\cC = -(Q+\Th^\top R \Th + S^\top\Th  +\Th^\top S) <0, \\[2mm]
& \vP\wh\cA_{\scT}(s)+\wh\cA_{\scT}(s)^\top\vP+\wh\cC_{\scT}(s)^\top P\wh\cC_{\scT}(s) \\
&\q= \vP\wh\cA+\wh\cA^\top\vP+\wh\cC^\top P\wh\cC + \vP[\wh\cA_{\scT}(s)-\wh\cA\,] + [\wh\cA_{\scT}(s)-\wh\cA\,]^\top\vP \\
&\q\hp{=\ } + [\wh\cC_{\scT}(s)-\wh\cC\,]^\top P\wh\cC + \wh\cC_{\scT}(s)^\top P[\wh\cC_{\scT}(s)-\wh\cC\,], \\[2mm]
& \vP\wh\cA+\wh\cA^\top\vP+\wh\cC^\top P\wh\cC = -(\hQ+\wh\Th^\top\hR\wh\Th+\hS^\top\wh\Th+\wh\Th^\top\hS\,) <0.
\end{align*}
Also, observe that for some constant $K>0$,
$$|\cA_{\scT}(t)|+|\wh\cA_{\scT}(t)|+|\cC_{\scT}(t)|+|\wh\cC_{\scT}(t)| \les K, \q\forall 0\les t\les T<\i, $$
and similar to \rf{hAT-hA}, we have
$$|\cA_{\scT}(t)-\cA| +|\wh\cA_{\scT}(t)-\wh\cA\,| + |\cC_{\scT}(t)-\cC|
+ |\wh\cC_{\scT}(t)-\wh\cC\,|\les Ke^{-\l(T-t)}, \q\forall 0\les t\les T<\i, $$
for some constants $K,\l>0$.
Then it follows that for some constant $\a>0$,
\begin{align}
&\dbE\blan[P\cA_{\scT}(s)+ \cA_{\scT}(s)^\top P + \cC_{\scT}(s)^\top P\cC_{\scT}(s)]V_{\scT}(s),V_{\scT}(s)\bran \nn\\
&\q\les K\[-2\a + e^{-\l(T-s)}\]\dbE|V_{\scT}(s)|^2, \label{09-03:1}\\[2mm]
&\blan[\vP\wh\cA_{\scT}(s)+\wh\cA_{\scT}(s)^\top\vP+\wh\cC_{\scT}(s)^\top P\wh\cC_{\scT}(s)]\dbE[\wX_{\scT}(s)],\dbE[\wX_{\scT}(s)]\bran \nn\\
&\q\les  K\[-2\a + e^{-\l(T-s)}\]|\dbE[\wX_{\scT}(s)]|^2. \nn
\end{align}
Using \autoref{prop:EwX:bound}, we can further conclude that
\begin{align}\label{09-03:2}
\blan[\vP\wh\cA_{\scT}(s)+\wh\cA_{\scT}(s)^\top\vP+\wh\cC_{\scT}(s)^\top P\wh\cC_{\scT}(s)]\dbE[\wX_{\scT}(s)],\dbE[\wX_{\scT}(s)]\bran
\les  K e^{-\l(T-s)}.
\end{align}
Moreover, by the Cauchy--Schwarz inequality and \rf{X*:bound},
\begin{align}\label{09-03:3}
& 2\dbE\blan[P(\cA_{\scT}(s)-\cA)+\cC_{\scT}(s)^\top P(\cC_{\scT}(s)-\cC)]X^*(s),V_{\scT}(s)\bran \nn\\
&\q\les K\[\a\dbE|V_{\scT}(s)|^2 + e^{-\l(T-s)}\],
\end{align}
and by \autoref{lmm:th:bound}, \autoref{prop:EwX:bound}, and \autoref{prop:EX*bound},
\begin{align}\label{09-03:4}
 \dbE|\phi_{\scT}(s)| \les  K e^{-\l(T-s)}.
\end{align}
Substitution of \rf{09-03:1}--\rf{09-03:4} into \rf{EV+vPEwX=}, we obtain
\begin{align*}
&\dbE\lan PV_{\scT}(t),V_{\scT}(t)\ran- \lan\vP\tx,\tx\ran \\
&\q\les \dbE\lan PV_{\scT}(t),V_{\scT}(t)\ran + \blan\vP\dbE[\wX_{\scT}(t)],\dbE[\wX_{\scT}(t)]\bran - \lan\vP\tx,\tx\ran \\
&\q\les  K\int_0^t\[\(-\a + e^{-\l(T-s)}\)\dbE|V_{\scT}(s)|^2 + e^{-\l(T-s)}\]ds.
\end{align*}
Since $P>0$, the above implies that
\begin{align*}
\dbE|V_{\scT}(t)|^2 \les K + K\int_0^t\[\(-\a + e^{-\l(T-s)}\)\dbE|V_{\scT}(s)|^2 + e^{-\l(T-s)}\]ds.
\end{align*}
Then, by Gronwall's inequality, we have
\begin{align*}
\dbE|V_{\scT}(t)|^2 \les  K\[e^{-\l t}+e^{-\l(T-t)}\], \q\forall t\in[0,T],
\end{align*}
with possibly different constants $K,\l>0$. The desired \rf{X*-X*T} now follows from
\begin{align*}
X^*_{\scT}(t)-\bm X^*(t) = V_{\scT}(t)+\dbE[\wX_{\scT}(t)]
\end{align*}
and
\begin{align*}
u^*_{\scT}(t)-\bm u^*(t)
&= \Th_{\scT}(t)[X^*_{\scT}(t)-\bm X^*(t)] + [\Th_{\scT}(t)-\Th]X^*(t) \\
&\hp{=\ } +[\wh\Th_{\scT}(t)-\Th_{\scT}(t)]\dbE[\wX_{\scT}(t)] + \th_{\scT}(t) + \wh\th_{\scT}(t).
\end{align*}
The proof is complete.
\end{proof}

Next, we prove \autoref{crlry:value}.

\begin{proof}[\textbf{Proof of \autoref{crlry:value}}]
By \autoref{prop:EX*bound},
$$ \dbE|\bm X^*(t)|^2 + \dbE|\bm u^*(t)|^2\les K, \q\forall t\ges0 $$
for some constant $K>0$, and hence by \autoref{thm:main},
$$ \dbE|X^*_{\scT}(t)|^2 + \dbE|u^*_{\scT}(t)|^2 \les K, \q\forall 0\les t\les T<\i, $$
for some possibly different constant $K>0$. The above, together with \autoref{crlry:jifen-TP},
implies that
\begin{align*}
&\lim_{T\to\i} {1\over T}V_{\scT}(x) = \lim_{T\to\i} {1\over T}J_{\scT}(x;u^*_{\scT}(\cd)) \\
&\q= \lim_{T\to\i} {1\over T}\int_0^T\Big\{\dbE\big[\lan QX^*_{\scT}(t),X^*_{\scT}(t)\ran
   +2\lan SX^*_{\scT}(t),u^*_{\scT}(t)\ran + \lan Ru^*_{\scT}(t),u^*_{\scT}(t)\ran\big] \\
&\q\hp{=\ } +\lan\bQ\dbE[X^*_{\scT}(t)],\dbE[X^*_{\scT}(t)]\ran + 2\lan\bS\dbE[X^*_{\scT}(t)],\dbE[u^*_{\scT}(t)]\ran
          +\lan\bR\dbE[u^*_{\scT}(t)],\dbE[u^*_{\scT}(t)]\ran \\
&\q\hp{=\ } +2\lan q,\dbE[X^*_{\scT}(t)]\ran + 2\lan r,\dbE[u^*_{\scT}(t)]\ran \Big\}dt \\
&\q= \lim_{T\to\i} {1\over T}\int_0^T\Big\{\dbE\big[\lan Q\bm X^*(t),\bm X^*(t)\ran
     +2\lan S\bm X^*(t),\bm u^*(t)\ran + \lan R\bm u^*(t),\bm u^*(t)\ran\big] \\
&\q\hp{=\ } +\lan\bQ\dbE[\bm X^*(t)],\dbE[\bm X^*(t)]\ran + 2\lan\bS\dbE[\bm X^*(t)],\dbE[\bm u^*(t)]\ran
            +\lan\bR\dbE[\bm u^*(t)],\dbE[\bm u^*(t)]\ran  \\
&\q\hp{=\ } +2\lan q,\dbE[\bm X^*(t)]\ran + 2\lan r,\dbE[\bm u^*(t)]\ran \Big\}dt.
\end{align*}
Noting that
$$ \dbE[\bm X^*(t)] = \dbE[X^*(t) + x^*] = x^*, \q \dbE[\bm u^*(t)]= \dbE[\Th X^*(t) + u^*] = u^*,$$
we further have
\begin{align}\label{VT:limit-1}
\lim_{T\to\i} {1\over T}V_{\scT}(x)
&=\lim_{T\to\i} {1\over T}\int_0^T\Big\{\dbE\big[\lan QX^*(t),X^*(t)\ran  + \lan Q x^*,x^*\ran \nn\\
&\hp{=\ } +2\lan SX^*(t),\Th X^*(t)\ran + 2\lan Sx^*,u^*\ran  \nn\\
&\hp{=\ } +\lan R\Th X^*(t),\Th X^*(t)\ran+ \lan R u^*,u^*\ran \big] \nn\\
&\hp{=\ } +\lan\bQ x^*,x^*\ran + 2\lan\bS x^*,u^*\ran + \lan\bR u^*,u^*\ran + 2\lan q,x^*\ran + 2\lan r,u^*\ran \Big\}dt \nn\\
&=\lim_{T\to\i} {1\over T}\int_0^T \dbE\lan (Q+S^\top\Th+\Th^\top S +\Th^\top R\Th)X^*(t),X^*(t)\ran dt \nn\\
&\hp{=\ } +\lan\hQ x^*,x^*\ran + 2\lan\hS x^*,u^*\ran + \lan\hR u^*,u^*\ran + 2\lan q,x^*\ran + 2\lan r,u^*\ran.
\end{align}
On the other hand, with $P>0$ being the solution to the ARE \rf{ARE:P} and noting that $\dbE[X^*(t)]=0$, we have
\begin{align*}
\dbE\lan PX^*(T),X^*(T)\ran
&= \dbE\int_0^T \Big\{2\lan PX^*(t),(A+B\Th)X^*(t)\ran \\
&\hp{=\ } + \lan P[(C+D\Th)X^*(t)+\si^*],(C+D\Th)X^*(t)+\si^*\ran\Big\}dt \\
&= \dbE\int_0^T \Big\{\lan [P(A+B\Th)+(A+B\Th)^\top P \\
&\hp{=\ } + (C+D\Th)^\top P(C+D\Th)]X^*(t),X^*(t)\ran + \lan P\si^*,\si^*\ran\Big\}dt \\
&= \dbE\int_0^T \!\[\!-\lan(Q+S^\top\!\Th+\Th^\top\! S +\Th^\top\! R\Th)X^*(t),X^*(t)\ran + \lan P\si^*,\si^*\ran\]dt.
\end{align*}
Since $\dbE|X^*(T)|^2$ is bounded in $T$ (see \autoref{prop:EX*bound}), we obtain
\begin{align}\label{VT:limit-2}
&\lim_{T\to\i} {1\over T}\int_0^T \dbE\lan(Q+S^\top\Th+\Th^\top S +\Th^\top R\Th)X^*(t),X^*(t)\ran dt \nn\\
&\q=\lim_{T\to\i} {1\over T}\lt[-\dbE\lan PX^*(T),X^*(T)\ran + \int_0^T\lan P\si^*,\si^*\ran dt\rt] \nn\\
&\q= \lan P\si^*,\si^*\ran.
\end{align}
Combining \rf{VT:limit-1} and \rf{VT:limit-2}, we get the desired result.
\end{proof}

Finally, we prove \autoref{crlry:TP-adjoint}.

\begin{proof}[\textbf{Proof of \autoref{crlry:TP-adjoint}}]
By a tedious but straightforward calculation we can verify that in \rf{os:wX},
the adapted solution $(\wY_{\scT}(\cd),Z^*_{\scT}(\cd))$ to the backward SDE
has the following relation with $(\wX_{\scT}(\cd),\tu_{\scT}(\cd))$:
\begin{align*}
\wY_{\scT}(t) &= P_{\scT}(t)\Big\{\wX_{\scT}(t)-\dbE[\wX_{\scT}(t)]\Big\} + \vP_{\scT}(t)\dbE[\wX_{\scT}(t)] + \wh\f_{\scT}(t), \\
Z^*_{\scT}(t) &= P_{\scT}(t)\Big\{C\wX_{\scT}(t) + D\tu_{\scT}(t) + \bC\dbE[\wX_{\scT}(t)] + \bD\dbE[\tu_{\scT}(t)] + \si^*\Big\}.
\end{align*}
Using the relation \rf{rep:hu}, we further obtain
\begin{align*}
Z^*_{\scT}(t) &= P_{\scT}(t)\Big\{[C+D\Th_{\scT}(t)]\wX_{\scT}(t)
                 + [\bC-D\Th_{\scT}(t)]\dbE[\wX_{\scT}(t)] + \hD\dbE[\tu_{\scT}(t)] + \si^*\Big\}.
\end{align*}
For $\dbE|Y^*_{\scT}(t)-\bm Y^*(t)|^2$, we first observe that
\begin{align*}
Y^*_{\scT}(t)-\bm Y^*(t)
&= \wY_{\scT}(t)- PX^*(t) \\
&= P_{\scT}(t)\wX_{\scT}(t)-PX^*(t)+ [\vP_{\scT}(t)-P_{\scT}(t)]\dbE[\wX_{\scT}(t)] + \wh\f_{\scT}(t)  \\
&= P_{\scT}(t)[X^*_{\scT}(t)-\bm X^*(t)] + [P_{\scT}(t)-P]X^*(t) \\
&\hp{=\ } + [\vP_{\scT}(t)-P_{\scT}(t)]\dbE[\wX_{\scT}(t)] + \wh\f_{\scT}(t).
\end{align*}
By \autoref{lmm:Si-P:bound} and \autoref{thm:vP-Pi:bound}, $P_{\scT}(\cd)$ and $\vP_{\scT}(\cd)$ are bounded uniformly in $T$,
and by \autoref{prop:EX*bound}, $\dbE|X^*(\cd)|$ is also bounded. The above then implies that
\begin{align*}
\dbE|Y^*_{\scT}(t)-\bm Y^*(t)|^2
&\les K\Big\{\dbE|X^*_{\scT}(t)-\bm X^*(t)|^2 + |P_{\scT}(t)-P|^2 + |\dbE[\wX_{\scT}(t)]|^2 + |\wh\f_{\scT}(t)|^2\Big\},
\end{align*}
for some constant $K>0$. Recalling \autoref{lmm:Si-P:bound}, \autoref{thm:main}, \autoref{lmm:th:bound},
and \autoref{prop:EwX:bound}, we obtain that for some constants $K,\l>0$ independent of $T$,
\begin{align*}
\dbE|Y^*_{\scT}(t)-\bm Y^*(t)|^2 \les K\[e^{-\l t} + e^{-\l(T-t)}\], \q\forall t\in[0,T].
\end{align*}
In a similar manner, we can show that
\begin{align*}
\dbE|Z^*_{\scT}(t)-\bm Z^*(t)|^2 \les K\[e^{-\l t} + e^{-\l(T-t)}\], \q\forall t\in[0,T].
\end{align*}
The proof is complete.
\end{proof}

\end{document}